\documentclass{amsart}
\usepackage{geometry}
\geometry{letterpaper}
\usepackage{graphicx}
\usepackage{amssymb}
\usepackage{epstopdf}
\DeclareGraphicsRule{.tif}{png}{.png}{`convert #1 `dirname #1`/`basename #1 .tif`.png}



\newcommand{\pf}{\mathcal{P}}
\DeclareMathOperator{\Tr}{Tr}
\newcommand{\R}{\mathbb{R}}
\newcommand{\T}{\mathbb{T}}
\newcommand{\so}{\mathfrak{so}}

\newtheorem{theorem}{Theorem}

\newtheorem{proposition}[theorem]{Proposition}
\newtheorem{corollary}[theorem]{Corollary}
\newtheorem{lemma}[theorem]{Lemma}
\newtheorem{conjecture}{Conjecture}

\title{Relative Equilibria of the 3-body Problem in $\mathbb{R}^4$}
\author{Alain Albouy, Holger R.~Dullin}

 \dedicatory{Dedicated to James Montaldi}

\subjclass{37N05, 70F10, 70F15, 70H33, 53D20.}
 \keywords{3-body problem, symplectic symmetry reduction, Lyapunov stability}

\email{alain.albouy@obspm.fr}
\email{holger.dullin@sydney.edu.au}

\begin{document}

\begin{abstract}
The classical equations of the Newtonian 3-body problem do not only define the familiar 3-dimensional motions. The dimension of the motion may also be 4, and cannot be higher. We prove that in dimension 4, for three arbitrary positive masses, and for an arbitrary value (of rank 4) of the angular momentum, the energy possesses a minimum, which corresponds to a motion of relative equilibrium which is Lyapunov stable when considered as an equilibrium of the reduced problem. The nearby motions are nonsingular and bounded for all time. We also describe the full family of relative equilibria, and show that its image by the energy-momentum map presents cusps and other interesting features.
\end{abstract}

\maketitle

\section{Introduction}

When considering the 3-body problem in 4-dimensional space additional relative equilibria appear,
and any triangle may appear as the shape of a relative equilibrium for certain masses.
Such unorthodox configurations were first described by Albouy and Chenciner and 
termed balanced configurations \cite{AlbouyChenciner98}.
The collinear configuration of Euler and the equilateral configuration of Lagrange appear as 
particular cases of such balanced configurations.

Denote the three masses by  $m_1,m_2,m_3$ and their position vectors  in $\R^4$ by $q_i$, $i=1,2,3$, 
with conjugate momenta $p_i \in \R^4$.
The Hamiltonian $H$ is the sum of kinetic energy $T$ and the Newtonian potential $V$:
\[
H = T + V, \quad
T = \frac12 \sum_{i=1}^3 \frac{  |p_i|^2 }{m_i}, \quad
V = -\sum_{i<j} \frac{ m_i m_j }{|q_i - q_j|} \,.
\]
Relative equilibria of the 3-body problem in $\R^4$ are equilibria in a certain rotating frame.
A rotating frame is determined by a rotation $R \in SO(4)$ with constant angular velocity 
$\Omega \in \so(4)$, i.e.{} $\Omega$ is a $4\times 4$ anti-symmetric matrix and 
$R = \exp( \Omega t)$. Thus set $q_i = \exp(\Omega t) \xi_i$ where $\xi_i$ are constant 
vectors in $\R^4$ and Newton's equations $m_i \ddot q_i = - \nabla_{q_i} V$ turn into the 
algebraic equations
\begin{equation} \label{eqn:releq}
   \Omega^2 \xi_1 =     \frac{ m_2 (\xi_2 - \xi_1)}{|\xi_2 - \xi_1|^3} + 
    \frac{ m_3 (\xi_3 - \xi_1)}{|\xi_3 - \xi_1|^3} 
\end{equation}
and two similar equations obtained by cyclic permutation of the indices.
In dimension 2 the matrix $\Omega^2$ is proportional to the identity,
and the equation reduces to the well known equation for central configurations,
where $\Omega^2$ is a negative scalar. The 3-dimensional case
reduces to the 2-dimensional case because in the gravitational problem 
all relative equilibria are planar.
No such reduction is possible in $\R^4$, and in general $\Omega^2$ is not 
proportional to the identity. The configuration of a relative equilibrium 
for which $\Omega^2$ is not necessarily proportional to the identity is called a 
balanced configuration \cite{AlbouyChenciner98}.

Thus the main  feature in the 4-dimensional problem that distinguishes it from the 3-dimensional 
problem is that the rotational symmetry group is $SO(4)$ instead of $SO(3)$.
Hence the angular velocity $\Omega$ 
and the angular momentum 
$L = \sum  q_i p_i^t - p_i  q_i^t = S \Omega + \Omega S$ 
are now anti-symmetric $4\times4$ matrices where $S = \sum m_i \xi_i \xi_i^t$ 
is the inertia tensor of the configuration.
The characteristic polynomial of an antisymmetric matrix $L$ may be written as 
$\det( L - \lambda I) = \lambda^4 + \lambda^2 \ell^2 + \pf^2$,
where  $\ell$ is the norm of $L$ and $\pf$ is its Pfaffian:
\[
     \ell^2 = \frac12 \Tr L L^t, \quad
     \pf^2 = \det L \,.
\]
Since the eigenvalues of an anti-symmetric matrix are on the imaginary axis we
have  $\ell^2 \ge 2 |\pf|$.  The eigenvalues of $\Omega^2$ are negative and at least double.

The equations that determine balanced configurations are invariant under the scaling 
$(\xi, \Omega) \to (s \xi, s^{-3/2} \Omega)$. 
As a result of this scaling energies scale with $s^{-1}$ and angular momentum scales with $s^{1/2}$, 
such that the product of the energy $H$ with squares of angular momenta is invariant under scaling.
Essential features of relative equilibria are described by their energy-momentum map.
Because of the scaling symmetry of the 3-body problem we choose 
two special scale-invariant combinations of the energy $H$ and the invariants 
of $L$:
\begin{equation} \label{eqn:hk}
  h=  H (\ell^2 + 2 |\pf|),\qquad k= \frac{|\pf|}{ \ell^2 + 2 |\pf|} \,.
\end{equation}
Since $\ell^2 \ge 2 |\pf|$ we have $0 \le k \le \tfrac14$.
The particular form of the scaling is motivated by the observation that for equilateral 
relative equilibria we have $h = h_L = const$, see the next section.
Denote the eigenvalues of $L$ by $\pm i \mu_1$, $\pm i \mu_2$ and choose  $\mu_1 > \mu_2 > 0$.
Then the scaling factor $\ell^2 + 2 |\pf|$ can be written as $(\mu_1 +  \mu_2)^2$.
The dimensionless ratio $k = \mu_1 \mu_2 / (\mu_1 + \mu_2)^2$ thus satisfies $ 0 \le k \le 1/4$.
Note that $k$ is a symmetric function of $\mu_1$, $\mu_2$, and hence the ordering of $\mu_i$ does not matter.

In this paper we describe the families of relative equilibria
for the 3-body problem in $\R^4$. Any motion of 3 bodies, with centre of mass at the origin, which does not remain in a fixed linear subspace $\R^3\subset \R^4$ has an angular momentum $L$ of rank 4. We prove that the Hamiltonian is bounded from below, and hence has a global minimum, on any level set of $L$ for which the fixed value of $L$ is of rank 4. Incidentally this shows that 
``Oldest problem in dynamical systems'' as formulated by Herman in his 1998 ICM 
address  \cite{Herman98}, also see \cite{ACS12},  has the answer ``No'' in dimension 4: 
Near the minimum of the Hamiltonian no initial condition is unbounded, 
see Theorem~\ref{thm:sta} below. By contrast, in the $n$-body in $\R^3$, $n\geq 3$, the rank of $L$ is 0 or 2, and $H$ does not even possess local minima on a level set of $L$. The importance of this fact was recently emphasised by \cite{Scheeres12} and \cite{Moeckel17}.
In \cite{DS19}  the symplectic reduction for the 3-body problem in $\R^4$ is described. 
In that paper it is shown that all three families of balanced configurations
are local minima of the reduced Hamiltonian near 
their collision limit when $k$ is sufficiently small. By contrast, in the present paper
we show that for any fixed $k$ (not only sufficiently small) there is a globally minimal 
balanced configuration.
Using the globally symmetry reduced 3-body problem with the Poisson structure described  in \cite{Dullin13} the stability and minimality of the Hamiltonian along all families of balanced configurations will be studied in a forthcoming paper.

\section{Equilateral solutions (any masses)}

Say we have an equilateral solution with $|\xi_i - \xi_j | = r$ then 
by scaling symmetry we can choose $r = 1$ and \eqref{eqn:releq}
becomes linear in $\xi_i$ for given $\Omega^2$. Complementing 
the equations by $\sum m_i \xi_i = 0$ shows that the solvability 
condition is that both frequencies are equal to $m_1 + m_2 + m_3$ 
and hence these solutions are central configurations. 
A particular solution can be constructed in a 2-dimensional subspace by 
picking any equilateral triangle with centre of mass at the origin.
There is a family of such solutions because given that $\Omega^2$ 
is proportional to the identity, there is a family of matrices $\Omega$
whose square is the identity: any anti-symmetric matrix of the form 
\[
    J = \begin{pmatrix}
    0    & -u_1 & -u_2 &  -u_3 \\
    u_1 & 0     & \phantom{-}u_3 & - u_2 \\
    u_2 & -u_3 & 0   & \phantom{-}u_1 \\
    u_3 & \phantom{-}u_2 & - u_1 & 0
    \end{pmatrix}
\]
with $u_1^2 + u_2^2 + u_3^3 = 1$ satisfies $J^2 = -I$, it defines a complex structure.
Thus we can set $\Omega = \omega J$ for some scalar $\omega$. 
This leads to a characterisation of all equilateral relative equilibria, related to a characterisation obtained by Chenciner \cite{Chenciner11}:

\begin{theorem} \label{thm:equilateral}
For the family of equilateral solutions we have 
\[
h = h_L = -\tfrac12 M_2^3/ M, \quad
\text{and} \quad
k \in [0, \tfrac34 M_3 M / M_2^2]
\]
where the symmetric functions of the masses are
\[
M = m_1+m_2 +m_3, \quad M_2 = m_1 m_2 + m_2 m_3 + m_3 m_1, \quad M_3 = m_1 m_2 m_3 \,.
\]
\end{theorem}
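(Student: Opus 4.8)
The plan is to use the scaling symmetry to normalise the common side length to $r=1$, so that the potential is read off immediately as $V=-M_2$, and then to parametrise the whole family by the choice of angular velocity $\Omega$ compatible with the fixed equilateral shape. First I would note that three points always span a plane $P\subset\R^4$, and that substituting $|\xi_i-\xi_j|=1$ together with the centre-of-mass condition $\sum m_i\xi_i=0$ into \eqref{eqn:releq} turns the relative-equilibrium equations into $\Omega^2\xi_i=-M\xi_i$. Thus $P$ must lie in the $-M$-eigenspace of the symmetric matrix $\Omega^2$. Since the eigenvalues of $\Omega^2$ are negative and at least double, either this eigenspace is exactly $P$, in which case $\Omega$ preserves $P$, the two angular velocities differ, and a short computation gives $\pf=0$, i.e.\ the endpoint $k=0$; or $\Omega^2=-M I$ and $\Omega=\sqrt M\,J$ for an \emph{arbitrary} complex structure $J$. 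The essential point, which produces the whole interval of values of $k$, is that in this second case $J$ need not preserve $P$.

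Next I would compute the energy. Since $p_i=m_i\Omega\xi_i$ and $J$ is orthogonal, $|\Omega\xi_i|^2=M|\xi_i|^2$, so $T=\tfrac12 M\sum m_i|\xi_i|^2=\tfrac12 M\,\Tr S$. The moment of inertia of the unit equilateral triangle is $\Tr S=M_2/M$, whence $T=\tfrac12 M_2$ and $H=T+V=-\tfrac12 M_2$. It then remains to evaluate the scale-invariant factor $\ell^2+2|\pf|=(\mu_1+\mu_2)^2$ on this family.

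For the angular-momentum invariants I would choose coordinates with $P=\mathrm{span}(e_1,e_2)$, so that $S=A\oplus 0$ with $A$ the $2\times2$ inertia tensor of the triangle, and write $L=\sqrt M\,(SJ+JS)$. Decomposing $J$ into $2\times2$ blocks with diagonal blocks $b\,\epsilon$, $d\,\epsilon$ (with $\epsilon$ the $2\times2$ antisymmetric generator) and off-diagonal block $C$, the condition $J^2=-I$ forces $d^2=b^2$ and the single clean relation $|\det C|=1-b^2$. A short computation using the identity $A\epsilon+\epsilon A=(\Tr A)\epsilon$ then gives $\ell^2=M\big(\Tr(A^2)+2b^2\det A\big)$ and, via the Pfaffian of the resulting block matrix, $|\pf|=M\,\det A\,(1-b^2)$. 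Adding, the $b$-dependent terms cancel:
\[
\ell^2+2|\pf|=M\big(\Tr(A^2)+2\det A\big)=M(\Tr A)^2=M_2^2/M,
\]
independent of the chosen complex structure. Hence $h=H(\ell^2+2|\pf|)=-\tfrac12 M_2^3/M$, proving the first assertion.

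Finally, $k=|\pf|/(\ell^2+2|\pf|)=\dfrac{M^2\det A}{M_2^2}\,(1-b^2)$, so as $J$ ranges over the sphere of complex structures, $b^2$ sweeps $[0,1]$ and $k$ sweeps $[0,\,M^2\det A/M_2^2]$. The last ingredient is the value of $\det A$: writing $A=\Xi\Xi^t$ with $\Xi=[\sqrt{m_i}\,\xi_i]$ and applying Cauchy--Binet, $\det A$ becomes a sum of squared mass-weighted triangle areas, which the centre-of-mass relation collapses to $\det A=\tfrac34 M_3/M$. This yields the stated upper bound $k_{\max}=\tfrac34 M_3 M/M_2^2$. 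I expect the main obstacle to be the angular-momentum computation: correctly extracting the constraint $|\det C|=1-b^2$ from $J^2=-I$, and then seeing the cancellation that makes $\ell^2+2|\pf|$ depend only on the shape and not on $J$, together with the Cauchy--Binet evaluation of $\det A$; by comparison the energy and the reduction to $\Omega=\sqrt M J$ are routine.
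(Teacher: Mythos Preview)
Your argument is correct and follows essentially the same route as the paper: parametrise the admissible angular velocities $\Omega=\sqrt{M}\,J$ by the family of complex structures, compute $\ell^2$ and $|\pf|$ in terms of the inertia data and the parameter, and observe that $\ell^2+2|\pf|$ collapses to $M(\Theta_1+\Theta_2)^2$ independently of $J$.

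The differences are only in bookkeeping. The paper works in a frame where $S$ is diagonal and uses the explicit parametrisation of $J$ by $(u_1,u_2,u_3)$, then simply quotes the identities $\Theta_1+\Theta_2=r^2M_2/M$ and $4\Theta_1\Theta_2=3r^4M_3/M$. You instead keep $A$ as a general $2\times2$ inertia block, decompose $J$ into $2\times2$ blocks, and extract the single relevant scalar $b$ (your $b$ is the paper's $u_1$; your $|\det C|=1-b^2$ is their $u_2^2+u_3^2$). Your use of $A\epsilon+\epsilon A=(\Tr A)\epsilon$ makes the cancellation in $\ell^2+2|\pf|=M(\Tr A)^2$ transparent without diagonalising $A$, and your Cauchy--Binet computation supplies a proof of $\det A=\tfrac34 M_3/M$ that the paper leaves to the reader. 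You are also slightly more careful than the paper about the degenerate case where the $-M$-eigenspace of $\Omega^2$ is exactly the triangle's plane; as you note, this still lands at $k=0$ and does not enlarge the image.
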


The curve described in this theorem is shown as a vertical black line in 
Figs.~\ref{fig:gen1}--\ref{fig:gen23}.
The maximal possible $k$ is $1/4$ when the masses are all equal,
while the maximal possible $k$ goes to zero when any mass goes to zero.
The scaled energy $h_L \le 0$ goes to zero when two masses go to zero, 
while the minimum $h_L = -9 m^5/2$ is found when all masses are equal.

\begin{proof}
Assuming we are in a coordinate system in which the tensor of inertia $S$ is diagonal, 
and since the triangle is planar two eigenvalues are equal to zero, the other two we call $\Theta_1, \Theta_2$. 
With such $S$ and the angular velocity  $\Omega = \omega J$ one computes that 
$\det L = ( \Theta_1 \Theta_2 ( u_2^2 + u_3^2) \omega^2 )^2$ 
and $\ell^2 = (\Theta_1^2 + \Theta_2^2 + 2 \Theta_1 \Theta_2 u_1^2) \omega^2$ and hence 
$\ell^2 + 2 \pf =  (\Theta_1 + \Theta_2)^2 \omega^2$ which is independent of $J$.
Notice that $H$ only depends on $\Omega^2$ which is 
independent of $J$, and hence $h$ will remain constant when $J$ is changed.
To compute the value of $h$ observe that the potential is $-M_2/r$ 
and the kinetic energy is $\frac12 (\Theta_1 + \Theta_2)\omega^2$. Finally the condition  
to have a relative equilibrium is $\omega^2 = M / r^3$,
so that $T = -V/2$.
For masses $m_1, m_2, m_3$ at the corners of an equilateral triangle with 
sides of length $r$ the tensor of inertia with respect to the centre of mass has eigenvalues $\Theta_1, \Theta_2$ 
that satisfy $4\Theta_1 \Theta_2 = 3 r^4 M_3/M$ and $\Theta_1 + \Theta_2 = r^2  M_2 /M $.
Thus the result follows.
\end{proof}

The image of this family is an interval in $k$, and thus trivially a convex polytope. 
In \cite{Chenciner13} it is shown that this is true in any dimension.
\emph{ We observe that for all relative equilibria that we found (see next sections) it is true that $h \le h_L$.}

\begin{figure} 
\includegraphics[width=6.5cm]{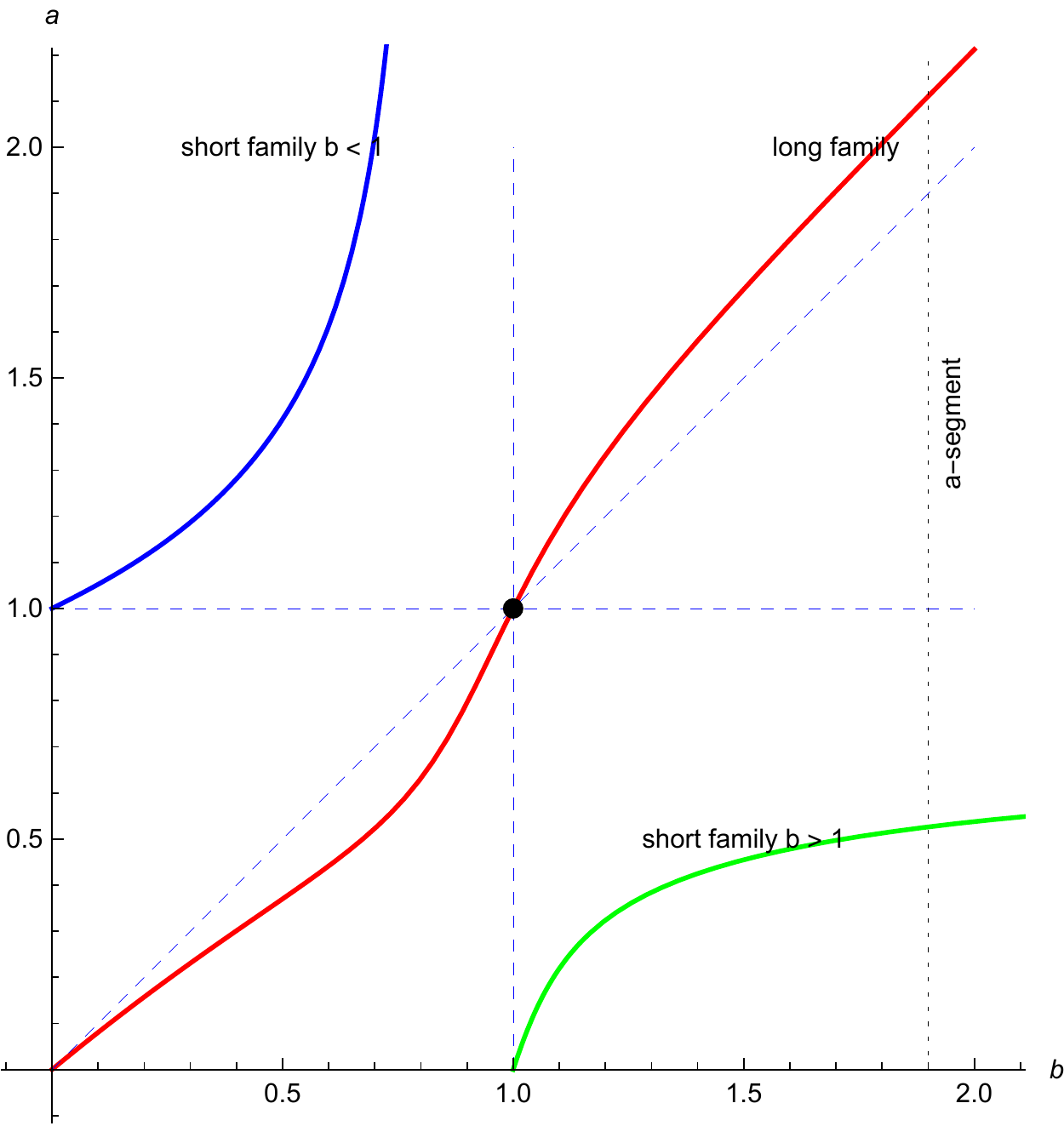} \hspace*{0.5cm}
\includegraphics[width=5cm]{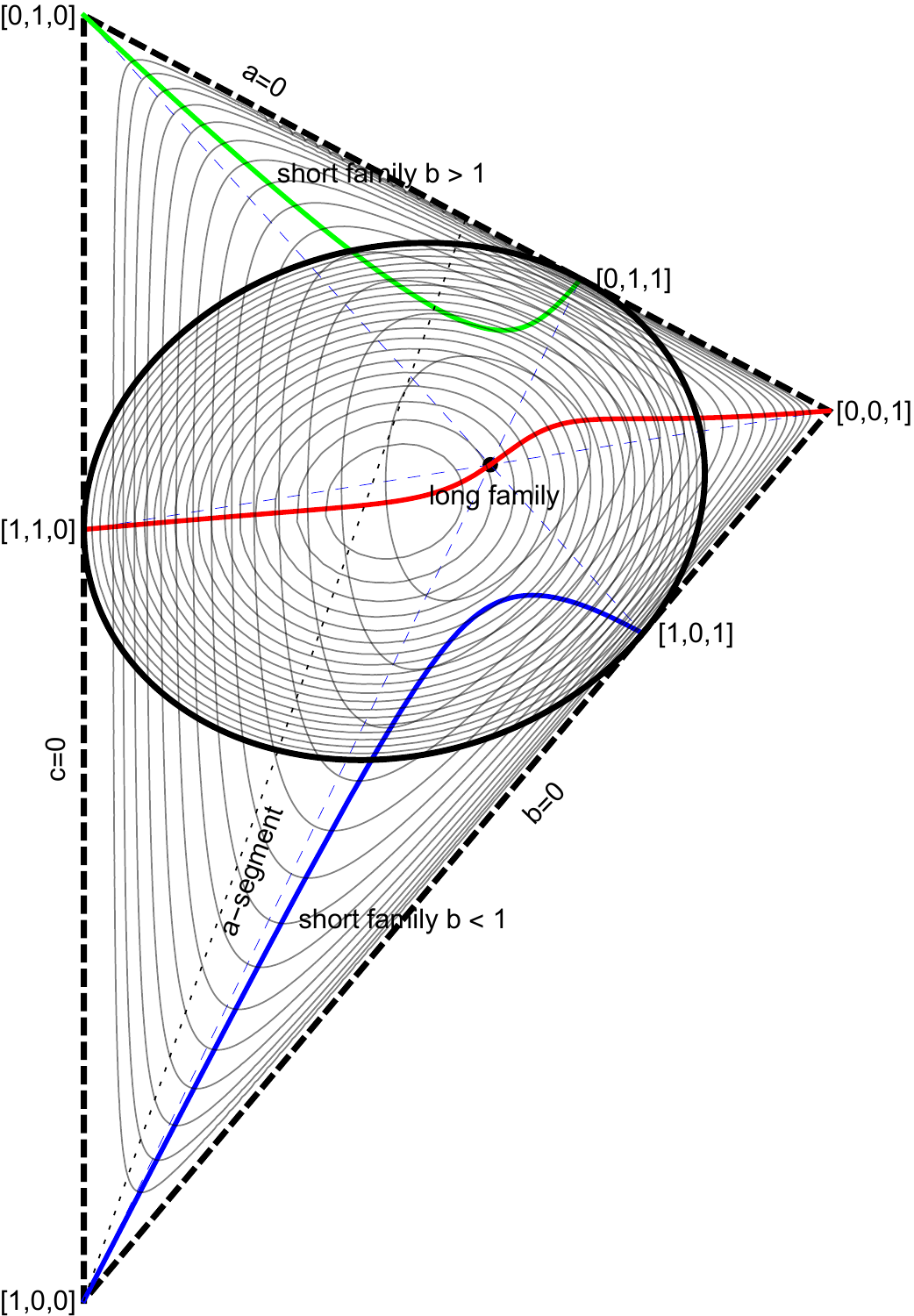}
\caption{Three smooth families of balanced configurations. Long family red, short families blue and green. Masses $(m_1, m_2, m_3) = (3,2,1)/6$.
Isosceles shapes are shown as dashed blue lines.
Left: $a(b)$ for $c=1$, the long family exists for all values of $b$. 
Right: The extended  triangle of shapes $I = const$ with boundary black dashed where one side length vanishes. 
The thick black ellipse marks shapes with area $A=0$ with contour lines of constant positive area inside. 
The other set of contour lines indicate $V = const$. Special points are marked by their projective triple $[a,b,c]$.}
\label{fig:shapetriangle}
\end{figure}

\section{Balanced configurations}

Denote the distances between the particles by $d_{ij} = |\xi_i - \xi_j|$, which are constant for relative equilibria.
Write $a=d_{23}^2$, $b=d_{13}^2$, $c=d_{12}^2$. 
The equation of balanced configurations  \cite{AlbouyChenciner98,Albouy04} is $B=0$ where
\begin{equation}\label{eqn:B}
   B=\begin{vmatrix} 1&1&1 \\ 
      m_1(b+c-a)&m_2(c+a-b)&m_3(a+b-c)\\
       a^{-3/2}&b^{-3/2}&c^{-3/2} 
   \end{vmatrix}.
\end{equation}
The moment of inertia, the squared area of the triangle, and the potential of a relative equilibrium are
\begin{align*}
 I = & \frac{ m_1 m_2 c + m_2 m_3 a + m_3 m_1 b }{m_1 + m_2 + m_3}, \\
 A^2 = &\frac{1}{16} (2 a c + 2 a b + 2 b c - a^2 - b^2 - c^2), \\
 V = & -m_1 m_2 c^{-1/2} -  m_2 m_3 a^{-1/2} -  m_3 m_1 b^{-1/2}
\end{align*}
where the area $A$ is determined by Heron's formula. We see immediately that the equilibrium condition $B = 0$ is obtained 
by requiring that the gradients of these three functions are linearly dependent.
In \cite{AlbouyChenciner98}, Prop.\ 2.25, this property is not deduced directly from
the dynamical property. An important intermediate step is needed: the equation of balanced configurations expressed as the commutation of two linear operators (Prop.\ 2.6).

Because of homogeneity we can restrict to $c=1$. The solutions of $B=0$ are shown 
as graphs $a(b)$ in Fig.~\ref{fig:shapetriangle}, left.
We will prove below that there are always three smooth solution families, 
and for $m_1 > m_2 > m_3$ they look as in the figure. 
The solution branch starting at the origin and going through the Lagrange point $a = b = c$ we call the long family,
the other two are the short families.
To illustrate the solutions to $B=0$ we restrict to the plane $I = const$, and plot the level 
lines of $A$ and $V$ on this plane, see Fig.~\ref{fig:shapetriangle}, right.
This presentation in the  triangle of shapes goes back to \cite{AlbouyChenciner98}.
The restriction of $I = const$ to the positive octant is the triangle of shapes shown in green, see Fig.~\ref{fig:shapetriangle},
whose edges correspond to the collisions.
Along these boundaries  $V \to -\infty$ and so the contour lines of the potential hug the edges of the triangle of shapes.
Within the triangle is an ellipse where the area vanishes, $A = 0$, which is a fat black curve tangent to 
the edges of the triangle of shapes in Fig.~\ref{fig:shapetriangle}, with the contour lines of the area $A$ limiting onto this ellipse.
Inside this ellipse the triangle inequality is satisfied.
Each function $A$ and $V$ (restricted to the plane $I = const$) has a critical point, 
and the long family  (shown in red) passes through these critical points.
The maximum of $V$ is at the equilateral Lagrange configuration $ a=b=c$, 
while  the maximum of $A$ is at the configuration with a round tensor of inertia given by
$a = m_1(m_2+m_3)$, $b=m_2(m_1+m_3)$, $c=m_3(m_1+m_2)$.

Solution curves that are outside the ellipse with $A=0$ are un-physical solutions, but we show 
them in any case because the three solution families all limit to the corners of the extended triangle of shapes
where two side-lengths are vanishing, indicated by $[0,0,1]$ etc. 
Solution curves that emerge at the tangency of the ellipse $A=0$ and the outer boundary of the 
triangle of shapes correspond to collisions where a single side-length vanishes, indicated by $[0,1,1]$ etc.
A crossing of a solution curve with the ellipse $A=0$ away 
from the outer boundary of the triangle of  shapes corresponds to degenerate shapes with area zero, 
i.e.\  Euler collinear solutions. Isosceles shapes are shown as thin blue lines.

If the three masses $m_1$, $m_2$, $m_3$ are distinct, we have three families which do not intersect, 
see Fig.~\ref{fig:shapetriangle}.
The same is true if two masses are equal and these masses are strictly greater than the third mass, 
see Fig.~\ref{fig:2equ12}.
If there is a pair of equal masses, one of the curves of balanced configurations is a segment of straight line, 
which contains isosceles configurations only. 
If the three masses are equal, the three curves are segments of straight lines, see Fig.~\ref{fig:equalmasses}.
They are all the isosceles triangles, and they intersect at the equilateral triangle. 
In the remaining case two masses are equal, and they are smaller than the third mass, see Fig.~\ref{fig:2equ2}.
The long family is a segment of a straight line made of isosceles triangles. 
Only in this case do the short families cross the long family at two distinct points.
These statements will now be proved. 
The figures display other features,  for example inflection points, which we will not discuss. 
We will prove two lemmas giving these proofs and a bit more.

The lines where $b$ and $c$ are fixed and $a$ is changing project as follows on the triangle of shapes. 
At $a=0$, the point is on the side of the triangle of shapes with equation $a=0$, the top line in 
Fig.~\ref{fig:shapetriangle}, connecting $[0,1,0]$ and $[0,0,1]$.
When $a$ is growing, the point follows a straight line. 
The slope of this line is determined by the ratio of $b$ and $c$. 
When $a=+\infty$, the point is at the vertex $[1,0,0]$ of the triangle of shapes which is opposite to the side $a=0$. 
We call such a segment an {\it $a$-segment}.
When the $a$-segment moves the roots of $B=0$ draw the curves of balanced configurations, 
which we study here in the whole triangle of shapes, not only inside the ellipse of true triangles.

\begin{lemma}\label{lem:A}
Along an $a$-segment, there are at most two roots of the equation $B=0$, except in the case $b=c$ and $m_2=m_3$, for which $B=0$ identically along the $a$-segment.
\end{lemma}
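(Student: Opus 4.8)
The plan is to turn the determinant $B$ into an explicit function of $a$ (with $b$ and $c$ held fixed), reduce it to a polynomial by the substitution $t=\sqrt a$, and then count positive roots with Descartes' rule of signs. The delicate point is that a naive application of Descartes permits three positive roots; the heart of the argument is to show that the sign pattern needed for three roots is arithmetically impossible, so that at most two sign changes — hence at most two roots — can occur.

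First I would expand $B$ along its third row. Only the entry $a^{-3/2}$ and the three second-row entries depend on $a$, and each second-row entry is affine in $a$, so multilinearity gives
\[
   B=\gamma\,a+\delta+(m_3-m_2)\,a^{-1/2}+(m_2+m_3)(b-c)\,a^{-3/2},
\]
where $\gamma=(m_1+m_2)c^{-3/2}-(m_1+m_3)b^{-3/2}$ and $\delta$ is a constant depending only on $b,c$ and the masses (its precise value will not be needed). Multiplying by $a^{3/2}>0$, which gains or loses no roots on $a>0$, and setting $t=\sqrt a$ turns the roots of $B=0$ into the positive roots of
\[
   g(t)=\gamma\,t^5+\delta\,t^3+(m_3-m_2)\,t^2+(m_2+m_3)(b-c),
\]
a quintic in $t$ whose $t^4$ and $t^1$ coefficients vanish.

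The coefficient sequence of $g$ is $\bigl(\gamma,\ \delta,\ m_3-m_2,\ (m_2+m_3)(b-c)\bigr)$, which a priori admits up to three sign changes. Three changes would require strict alternation of all four entries, forcing $\gamma$ and $m_3-m_2$ to share a sign opposite to that of $(m_2+m_3)(b-c)$, i.e.\ $\operatorname{sgn}\gamma=\operatorname{sgn}(m_3-m_2)=\operatorname{sgn}(c-b)$. I would rule this out directly from the formula for $\gamma$: if $m_3>m_2$ and $c>b$ then $c^{-3/2}<b^{-3/2}$ and $m_1+m_2<m_1+m_3$, whence $(m_1+m_2)c^{-3/2}<(m_1+m_3)b^{-3/2}$ and $\gamma<0$, contradicting $\gamma>0$; the all-negative case is symmetric. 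Thus strict alternation is impossible, $g$ has at most two sign changes, and Descartes' rule yields at most two positive roots, i.e.\ at most two roots of $B=0$ along the $a$-segment. This sign incompatibility is the main obstacle, and it is precisely the step that sharpens the elementary bound of three (obtainable from Descartes directly, or equivalently from the fact that $B''$ changes sign at most once on $a>0$) down to two.

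Finally, for the degenerate case I would substitute $b=c$ and $m_2=m_3$ into the four coefficients: the terms $m_3-m_2$ and $(m_2+m_3)(b-c)$ vanish at once, while with $b=c$ one has $\gamma=b^{-3/2}(m_2-m_3)$, which vanishes when $m_2=m_3$, and a one-line computation gives $\delta=0$ as well; hence $g\equiv0$ and $B$ vanishes identically along the segment. Conversely, if only one of $b=c$ or $m_2=m_3$ holds, exactly one coefficient drops out and the remaining sequence still has at most two sign changes, so the bound of two roots persists. This accounts for all cases and establishes the lemma.
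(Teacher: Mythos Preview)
Your argument is correct and arrives at the same conclusion by a genuinely different route from the paper's. The paper computes
\[
\frac{d^2B}{da^2}=\frac{3}{4}a^{-7/2}\bigl(a(m_3-m_2)+5(b-c)(m_2+m_3)\bigr),
\]
observes that this changes sign at most once, and then uses the asymptotics of $B$ at $a\to 0^+$ and $a\to+\infty$ to rule out a third root in the one case where the second derivative does change sign. You instead clear denominators, substitute $t=\sqrt a$, and apply Descartes' rule to the quintic $g(t)=\gamma t^5+\delta t^3+(m_3-m_2)t^2+(m_2+m_3)(b-c)$; the key step---showing that the sign pattern $\operatorname{sgn}\gamma=\operatorname{sgn}(m_3-m_2)=-\operatorname{sgn}(b-c)$ is arithmetically impossible from the explicit formula $\gamma=(m_1+m_2)c^{-3/2}-(m_1+m_3)b^{-3/2}$---is exactly the same obstruction the paper exploits when it checks that $B$ has the same sign at both ends of the $a$-segment. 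So the two proofs encode the same information (the two ``outer'' coefficients $\gamma$ and $(m_2+m_3)(b-c)$ control the endpoints, the two ``inner'' ones control curvature), but yours packages it as a single polynomial count, which is clean and avoids any case analysis on the behaviour of $B''$. One small wording point: when only one of $b=c$, $m_2=m_3$ holds, it is not literally true that \emph{exactly} one coefficient drops out ($\delta$ could vanish accidentally), but your conclusion is unaffected since at most three nonzero coefficients remain and hence at most two sign changes.
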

\begin{proof}
We compute
\[
\frac{d^2B}{da^2}=-3 \begin{vmatrix} 1&1&1\cr -m_1&m_2&m_3\cr a^{-5/2}&0&0 \end{vmatrix} +
\frac{15}{4} \begin{vmatrix} 1&1&1\cr m_1(b+c-a)&m_2(c+a-b)&m_3(a+b-c)\cr a^{-7/2}&0&0 \end{vmatrix} 
\]
$$=\frac{3}{4}(m_3-m_2)a^{-5/2}+\frac{15}{4}a^{-7/2}(m_2+m_3)(b-c)=\frac{3}{4}a^{-7/2}\bigl(a(m_3-m_2)+5(b-c)(m_2+m_3)\bigr).$$
If $m_3-m_2$ and $b-c$ have the same sign, or if one vanishes and not the other, $d^2B/da^2$ has constant sign, so that the function $B$ has at most two roots. If $m_3-m_2$ and $b-c$ have opposite signs, then $d^2B/da^2$ changes sign once, and $B$ could have up to three roots. But $B$ is asymptotically $(m_2+m_3)(b-c)a^{-3/2}$ near $a=0$ and  $(m_1+m_2)(c^{-3/2}-b^{-3/2})a+(m_2-m_3) b^{-3/2} a$ near $a=+\infty$. We see that the sign is the same at these two limits: this is the sign of $b-c$ which is by hypothesis also the sign of $m_2-m_3$.
So, there cannot be three roots, two is the maximum allowed. 
\end{proof}

This is essentially Lemma 4.3 of Albouy-Chenciner, \cite{AlbouyChenciner98}. Then, the authors claim that one can describe the balanced configuration curves with this lemma. Let us see this here, by proving another lemma.

\begin{lemma}\label{lem:B}
Suppose that $m_2$ and $m_3$ are the smallest masses. 
An $a$-segment with $b\neq c$ passes through two isosceles triangles.
When $a$ is growing from $a=0$, there is exactly one root before, or at, the first isosceles triangle. There are no roots between the two isosceles triangles.  There is at most one root after or at the second isosceles triangle. If $b=c$ and $m_2\neq m_3$ the equilateral solution is the only root.
\end{lemma}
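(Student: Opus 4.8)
The plan is to pin down the sign of $B$ at four distinguished values of $a$ along the segment, namely $a\to 0^+$, the two isosceles values, and $a\to+\infty$, and then read off the root count from the bound ``at most two roots'' of Lemma~\ref{lem:A}. The determinant \eqref{eqn:B} changes sign under the simultaneous exchange $m_2\leftrightarrow m_3$, $b\leftrightarrow c$, so its zero set is preserved and I may assume $b<c$ throughout. Along the $a$-segment the shape is isosceles precisely when two of the three squared side lengths coincide; as $b\neq c$ are held fixed this happens exactly at $a=b$ and at $a=c$, two distinct positive values, the first (for growing $a$) being $a=b$. This already gives the opening assertion that the segment meets two isosceles triangles.

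Next I would evaluate $B$ there. Subtracting the two columns of \eqref{eqn:B} that become proportional at an isosceles configuration and expanding gives
\[
B\big|_{a=b}=(m_1-m_2)\,c\,(b^{-3/2}-c^{-3/2}),\qquad
B\big|_{a=c}=(m_1-m_3)\,b\,(b^{-3/2}-c^{-3/2}).
\]
Since $m_2,m_3$ are the smallest masses we have $m_1\ge m_2$ and $m_1\ge m_3$, and $b<c$ makes $b^{-3/2}-c^{-3/2}>0$; hence $B\ge 0$ at both isosceles triangles, with equality only in case of a mass coincidence. From the proof of Lemma~\ref{lem:A} I also record $B\sim(m_2+m_3)(b-c)a^{-3/2}\to-\infty$ as $a\to0^+$.

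The conclusion now splits on the sign of $m_2-m_3$. If $m_2\le m_3$, the leading coefficient at $a=+\infty$ from the proof of Lemma~\ref{lem:A} simplifies to $(m_1+m_2)c^{-3/2}-(m_1+m_3)b^{-3/2}$, which is negative because $m_1+m_3\ge m_1+m_2>0$ and $b^{-3/2}>c^{-3/2}>0$; so $B\to-\infty$ at both ends while $B\ge0$ at $a=b$ and $a=c$, and the intermediate value theorem produces a root in $(0,b]$ and a root in $[c,\infty)$. As Lemma~\ref{lem:A} allows at most two roots these are all of them, giving exactly one at or before the first isosceles, none strictly between, and exactly one at or after the second. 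If instead $m_2>m_3$, then together with $b<c$ both summands of $a(m_3-m_2)+5(b-c)(m_2+m_3)$ are negative, so by the formula of Lemma~\ref{lem:A} $B$ is strictly concave on $(0,\infty)$: concavity keeps $B$ above the chord joining the nonnegative values at $a=b$ and $a=c$, so there is no root strictly between them, while a strictly concave function rising from $-\infty$ has at most two roots and, combined with $B\big|_{a=c}\ge0$, exactly one of them lies at or before $a=b$ (a zero strictly inside $(0,b)$ would force $B<0$ on all of $(b,\infty)$, contradicting $B\big|_{a=c}\ge0$) and at most one beyond $a=c$. Finally, for $b=c$ with $m_2\neq m_3$ the same column reduction collapses \eqref{eqn:B} to $B=(m_2-m_3)\,a\,(b^{-3/2}-a^{-3/2})$, which vanishes only at $a=b=c$: the equilateral solution.

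The step I expect to be the main obstacle is controlling the behaviour at $a=+\infty$, which is exactly the gap left open in Lemma~\ref{lem:A}, where in the mixed-sign case up to three roots seemed possible. The resolution is that the two regimes are complementary: when $m_2<m_3$ (the mixed-sign case for $d^2B/da^2$) the sign at infinity is in fact determined and negative, so the four-point sign count closes the argument, whereas when $m_2>m_3$ the second derivative has constant sign and global concavity substitutes for any knowledge of the limit at infinity. A secondary subtlety is the bookkeeping of the boundary cases where a mass coincidence makes $B$ vanish exactly at an isosceles triangle, which is precisely the content of the phrases ``before, or at'' and ``after or at''; these are absorbed by the same two arguments, since both the two-root count and the concavity-plus-$B\big|_{a=c}\ge0$ reasoning forbid any extra root from appearing strictly before the first isosceles triangle.
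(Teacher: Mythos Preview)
Your proof is correct and follows the paper's approach: evaluate $B$ at the two isosceles values $a=b$ and $a=c$ (same sign, nonnegative under the mass hypothesis), compare with the asymptotics at $a\to 0^+$ (opposite sign), and invoke Lemma~\ref{lem:A}. The paper compresses the endgame into ``we conclude easily using Lemma~\ref{lem:A}''; your case split on $\operatorname{sign}(m_2-m_3)$---using the sign at $a\to+\infty$ when $m_2\le m_3$ and strict concavity when $m_2>m_3$---is a valid way to make that step explicit.

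One wording quibble in the concave case: the parenthetical ``a zero strictly inside $(0,b)$ would force $B<0$ on all of $(b,\infty)$'' is not literally true of a \emph{single} zero. What you need (and what your argument in fact uses) is that \emph{two} zeros in $(0,b]$ would, by strict concavity, force $B<0$ beyond the larger one, contradicting $B|_{a=c}\ge 0$; hence exactly one root lies in $(0,b]$ and any second root lies in $[c,\infty)$.
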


\begin{proof}
If $a=b$ then $B=(m_2-m_1)c(c^{-3/2}-b^{-3/2})$. If $a=c$, $B=(m_3-m_1)b(c^{-3/2}-b^{-3/2})$. These two values have the same sign. At $a=0$ the asymptotics is
$(m_2+m_3)(b-c)a^{-3/2}$ as seen in the previous proof, which is of opposite sign. We conclude easily using Lemma~\ref{lem:A}. 
\end{proof}

{\bf Remark.} According to the hypothesis of Lemma~\ref{lem:B} and the asymptotics of $B$ as $a\to\infty$, the second root on the $a$-segment does exist except if 
\begin{equation}\label{eqn:O}
\Bigl(\frac{m_1+m_3}{m_1+m_2}\Bigr)^{2/3}<\frac{b}{c}<1\qquad\hbox{or if}\qquad
1<\frac{b}{c}<\Bigl(\frac{m_1+m_3}{m_1+m_2}\Bigr)^{2/3},
\end{equation}
the first case requiring $m_3<m_2$, the second $m_2<m_3$.
\medskip

The description of the curves is now easy. Essentially, we proved that when the $a$-segment sweeps out the triangle of shapes, it draws curves which can only meet each other when $b=c$. The curves are smooth since the roots cannot degenerate and the implicit function theorem can consequently be applied. Then, it is enough to consider the Lagrange equilateral triangle and the roots on the boundaries of the triangle of shapes to see how they should connect. The curve passing through the Lagrange point may be continued both sides and gives what we call the long family.
Thus we have proved
\begin{theorem}
The set of balanced configurations in the triangle of shapes is always the union of three smooth curves, with three pairs of ends which are the three vertices of the triangle of shapes and the three points on the sides corresponding to two equal mutual distances and a zero mutual distance.
\end{theorem}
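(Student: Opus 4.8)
The plan is to upgrade the root count of Lemmas~\ref{lem:A} and \ref{lem:B} to a global statement by feeding it into the implicit function theorem together with a boundary analysis. By the homogeneity of $B$ in \eqref{eqn:B} we work inside the two-dimensional triangle of shapes, the slice $I=\mathrm{const}$ of the positive octant $\{a,b,c>0\}$, which the $a$-segments (fix $b,c$; let $a$ run over $(0,+\infty)$) foliate. Since $B$ is equivariant under simultaneous permutation of the labels and of $(a,b,c)$, the same description applies to $b$-segments and $c$-segments; for each direction we relabel so that the two smallest masses occupy the positions required by Lemma~\ref{lem:B}.

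First I would prove smoothness and non-crossing away from the three diagonals $a=b$, $b=c$, $c=a$. Fix $b\neq c$. By Lemma~\ref{lem:B} any root on the $a$-segment lies either before or at the first isosceles triangle or after or at the second, with none strictly in between; since $b\neq c$ the two isosceles values are distinct, so the (at most two) roots are genuinely separated and can never coalesce. Tracking signs via the asymptotics and isosceles values recorded in the proofs of Lemmas~\ref{lem:A} and \ref{lem:B}, each root is a simple sign change, hence $\partial B/\partial a\neq0$ there. The implicit function theorem then presents the local solution set as a smooth graph over the direction transverse to the segment, and two such arcs cannot cross. Therefore branching or crossing of the solution curves can only happen where two roots merge, which by the separation forces $b=c$; by symmetry the only candidate meeting loci are the three diagonals.

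Next I would analyse the diagonals and the ends. On $b=c$ with $m_2\neq m_3$, Lemma~\ref{lem:B} leaves only the Lagrange point $a=b=c$, through which the long family passes smoothly (a $b$- or $c$-segment supplies the transverse direction and one checks $\nabla B\neq0$); the identically-vanishing case $b=c,\ m_2=m_3$ of Lemma~\ref{lem:A} accounts for the degenerate mass configurations in which a whole isosceles diagonal solves $B=0$ and a family becomes a straight segment. The ends are read from the same asymptotics: a root runs off to $a=+\infty$, reaching the vertex opposite the side $a=0$, exactly when $b/c$ attains the critical ratio $\bigl((m_1+m_3)/(m_1+m_2)\bigr)^{2/3}$ of \eqref{eqn:O} at which the leading $a\to+\infty$ coefficient vanishes, while for $b\neq c$ the segment carries no root near $a=0$, so the closure of the solution set can touch the side $a=0$ only at $b=c$, i.e.\ at the side-point with one vanishing distance and the other two equal. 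Counting with the always-present first root of Lemma~\ref{lem:B} and anchoring at Lagrange then shows how the six boundary ends, namely the three vertices and the three side-points, pair up into exactly three smooth curves.

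The main obstacle will be the global bookkeeping: converting the local arc picture into the precise claim of three curves with exactly the stated six ends, uniformly in the masses. One must match the first and second roots of Lemma~\ref{lem:B} consistently across the three segment directions and all mass orderings, and then separately verify the degenerate cases, where an isosceles family degenerates to a straight segment that may meet the others. Checking that any such meeting occurs only at isosceles shapes and that the endpoint inventory is unaffected is the delicate part; by contrast the separation and smoothness arguments above are routine.
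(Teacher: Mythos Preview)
Your proposal is correct and follows essentially the same route as the paper: use Lemmas~\ref{lem:A} and \ref{lem:B} to show that along each $a$-segment the roots of $B=0$ are simple and separated (hence nondegenerate, so the implicit function theorem applies and curves cannot meet except at $b=c$), then read off the six ends from the asymptotics of $B$ at $a\to 0$ and $a\to +\infty$ together with the Lagrange point to decide how the arcs connect. The paper's own argument is the brief paragraph preceding the theorem; your write-up is a more explicit version of the same reasoning, including the honest acknowledgement that the global bookkeeping across mass orderings and the degenerate equal-mass cases is where the work really lies.
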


The equations of the Euler configurations coincide with the equations of balanced configurations restricted to the flat triangles. The three above curves cannot cross the ellipse of flat triangles except at the three Euler points.

\begin{corollary}\label{cor:T}
The set of balanced configurations which consist of true triangles, i.e., which are inside the ellipse of flat triangles, is always the union of three smooth curves, with three pairs of ends which are the three Euler configurations and the three points corresponding to two equal mutual distances and a zero mutual distance.
\end{corollary}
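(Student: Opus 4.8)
My plan is to obtain Corollary~\ref{cor:T} from the theorem just proved by restricting the three balanced-configuration curves to the interior of the ellipse $A=0$ of flat triangles. The bridge is the observation already recorded above: on the flat triangles the equation $B=0$ reduces to the equation of the Euler collinear configurations. Consequently each of the three smooth curves of the preceding theorem can meet the ellipse \emph{only} at the three Euler points, which lie in the interior of the triangle of shapes, and at the three tangency points on its boundary, where one mutual distance vanishes and the other two are equal (the points $[0,1,1]$, $[1,0,1]$, $[1,1,0]$).

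First I would use Heron's expression in the rearranged form $16A^2 = 2a(b+c) - a^2 - (b-c)^2$ (and its cyclic analogues) as a signed height on the triangle of shapes: $A^2>0$ on the true triangles inside the ellipse, $A^2=0$ on the ellipse, and $A^2<0$ in the un-physical region between the ellipse and the sides. Restricted to one of the curves, the zeros of $A^2$ are exactly the Euler and tangency points that the curve meets, and on each subarc between consecutive zeros $A^2$ keeps a constant sign; hence every such subarc lies entirely among the true triangles or entirely outside. Since each vertex of the triangle of shapes lies in the region $A^2<0$ (there $16A^2=-a^2<0$), each curve begins at its vertex end outside the ellipse, and to reach its tangency endpoint, where $A^2=0$, it must cross the ellipse, a crossing which by the previous paragraph can only happen at an Euler point.

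The heart of the matter, and the step I expect to be the main obstacle, is to show that each curve crosses the ellipse at exactly one Euler point and then runs to its tangency endpoint from inside, so that its trace among the true triangles is a single smooth arc rather than several. The three Euler configurations are themselves balanced, hence each lies on one of the three curves, and I would prove this distribution to be one-to-one. For the long family the crossing is anchored by the equilateral Lagrange point $a=b=c$, which is interior ($A^2>0$); for the two short families I would combine the isosceles structure established in Lemma~\ref{lem:B} with the sign pattern of $A^2$ along the sweeping $a$-segments. To see that the tangency endpoints are reached from inside, a local analysis near a point such as $[0,1,1]$ is required: there the inequality $2a(b+c)-a^2>(b-c)^2$ cuts out a parabolic cusp of true triangles, and one checks that the balanced curve enters this cusp rather than staying in the exterior sliver.

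Granting these facts, each of the three curves contributes exactly one smooth arc of true triangles, bounded by an Euler point and a tangency point; smoothness is inherited from the preceding theorem, the implicit function theorem applying wherever the roots of $B=0$ are simple. The six ends of the three arcs are then the three Euler configurations and the three points with two equal mutual distances and one vanishing distance, which is precisely the assertion of Corollary~\ref{cor:T}.
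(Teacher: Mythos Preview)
Your approach is essentially the same as the paper's, but far more detailed. The paper's argument is literally two sentences preceding the Corollary: the equations of the Euler configurations coincide with the balanced-configuration equations restricted to the flat triangles, hence the three curves of the Theorem can meet the ellipse $A=0$ only at the three Euler points. The Corollary is then asserted without further justification.

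What you flag as ``the heart of the matter''---that the Euler points are distributed one per curve, and that each curve approaches its tangency endpoint from the inside---are precisely the topological steps the paper leaves implicit. Your strategy for handling them (using $A^2$ as a signed height, parity of crossings, the Lagrange point anchoring the long family, local analysis near the tangency points) is sound and would make the argument rigorous. The paper presumably regards these as routine once the key observation (balanced $+$ flat $=$ Euler) is made, together with the classical fact that there are exactly three Euler configurations. A quick way to close the counting without local analysis: since the curves are disjoint (generic masses) and together exhaust $\{B=0\}$, each Euler point lies on exactly one curve; if some curve met the ellipse only at its tangency endpoint, its interior trace would be empty, but then the three Euler points would be shared among only two curves, forcing one of them to cross at two Euler points---and an even number of transverse crossings is incompatible with starting outside ($A^2<0$ at the vertex) and ending on the ellipse with the adjacent arc inside.
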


Now we are going to show that along a family $a(b)$ as determined by the $a$-segment 
with constant $c$ there is monotonicity. For the proof we also need to consider 
the $b$-segment. The $b$-segment is a line segment starting at 
$[0,1,0]$ and ending somewhere on the opposite side $[a,0,c]$. 
Fixing $c=1$ the root $b(a)$ is determined as the zeroes of $B$ 
along the $b$-segment. These roots can also be seen in Fig.~\ref{fig:shapetriangle}.

\begin{lemma} \label{lem:mono}
When $m_1 > m_2 > m_3$ then along a family of balanced configuration with $c=1$ the function $a(b)$ is increasing with non-zero derivative.
\end{lemma}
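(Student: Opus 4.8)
The plan is to invoke the implicit function theorem: along the curve $B=0$ with $c=1$ fixed the slope is
\[
\frac{da}{db}=-\frac{\partial B/\partial b}{\partial B/\partial a},
\]
so the lemma amounts to showing that, along each of the three families, $B_a:=\partial B/\partial a$ and $B_b:=\partial B/\partial b$ are both nonzero and of opposite sign, with the combination giving a positive slope. Both partials are obtained by differentiating the determinant \eqref{eqn:B} one row at a time, exactly as in the proof of Lemma~\ref{lem:A}; the resulting expressions are manageable, and at the Lagrange point $a=b=c$ they collapse to $B_a=\tfrac32(m_2-m_3)$ and $B_b=\tfrac32(m_3-m_1)$. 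Since $m_1>m_2>m_3$ this already gives $B_a>0$, $B_b<0$, hence $da/db=(m_1-m_3)/(m_2-m_3)>0$ at Lagrange, which anchors the sign on the long family. Because each family is a connected smooth curve and the two partials are continuous, once nonvanishing is established the ratio cannot change sign, so a single reference evaluation fixes the sign along the whole family.

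First I would pin down $\mathrm{sign}(B_a)$ from the $a$-segment analysis already developed. At a root lying on a family, Lemma~\ref{lem:B} together with the asymptotics $B\sim(m_2+m_3)(b-c)a^{-3/2}$ near $a=0$ shows that $B$ crosses zero transversally there (it is the unique sign change forced between prescribed signs at $a=0$, at the isosceles values $a=b$, $a=c$, and at $a=+\infty$). Transversality is precisely $B_a\neq 0$, and the direction of the crossing reads off its sign; this is the content behind ``the roots cannot degenerate'' used earlier for smoothness.

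Next I would carry out the symmetric computation on the $b$-segment, set up in the paragraph preceding the lemma, to control $\mathrm{sign}(B_b)$. Here the relevant asymptotics are $B\sim(m_1+m_3)(c-a)b^{-3/2}$ as $b\to0$ and $B\sim\bigl(a^{-3/2}(m_2+m_3)-c^{-3/2}(m_1+m_2)\bigr)b$ as $b\to+\infty$, and the $b$-analogue of Lemma~\ref{lem:A} bounds the number of roots by two because the degenerate case would require $m_1=m_3$, excluded by strict ordering. Reading off the crossing direction of the root that belongs to the given family then yields $B_b\neq0$ with the sign opposite to $B_a$, so $da/db>0$ with nonzero derivative; propagating this by continuity along each connected family, and checking a convenient boundary/endpoint (a vertex of the triangle of shapes, or an Euler point) as reference for the short families, finishes the argument.

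The hard part will be the $b$-segment bookkeeping, because the convenient hypothesis of Lemma~\ref{lem:B} (that $m_2,m_3$ are the two smallest masses) does not transfer to $b$-segments: the mass $m_2$ attached to $b$ is the middle mass, not the largest, so the clean ``one root before, none between, one after'' structure is unavailable. Determining $\mathrm{sign}(B_b)$ therefore requires a case analysis using the remark's inequalities \eqref{eqn:O} to decide when the second $b$-root exists, and a careful matching of which root corresponds to the long family versus the two short families, with extra care near the $b=c$ degeneracies at the Lagrange and Euler configurations. Establishing that $B_b$ never vanishes along a family is genuinely the new content here (whereas $B_a\neq0$ is already the nondegeneracy used for smoothness), and it is where the bulk of the work lies.
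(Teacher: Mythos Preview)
Your plan is exactly the paper's: compute $da/db=-B_b/B_a$ by implicit differentiation, then pin down the signs of $B_a$ and $B_b$ on each family using the $a$-segment and $b$-segment structure (at most two roots, asymptotics at $0$ and $\infty$). Your reference computation at Lagrange and your $b$-asymptotics are correct (the paper's printed proof has a few sign and exponent slips there, but the method is the same). One point: your worry that the $b$-segment is substantially harder because Lemma~\ref{lem:B} does not transfer is overstated---the paper only needs the $b$-analogue of Lemma~\ref{lem:A} (which has no mass-ordering hypothesis) together with the endpoint asymptotics, and then matches roots to families by the position information already extracted from the $a$-segment analysis; no separate Lemma~\ref{lem:B} for $b$ is required.
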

\begin{proof}
The asymptotic values of $B$ for $b\to 0$ and $b\to \infty$ either have the same 
sign or opposite signs. 
In the case where $((m_1+m_3)/(m_1+m_2))^{3/2} < b/c < 1$ they have 
opposite signs, there is a single root along the $a$-segment, and $dB/da$ only takes negative values. For sufficiently large or small values of $b/c$ there are two roots along the $a$-segment, which are separated by a root of $dB/da$.
Combining with the asymptotic behaviour already described in 
Lemma~\ref{lem:A} this implies that along the long family 
$dB/da < 0$ while along both short families $dB/da > 0$.

Observe that analogues of Lemma~\ref{lem:A}, conditions (\ref{eqn:O}) and the previous
paragraph hold for the $b$-segment as well. 
For the $b$-segment there is a single root when $((m_2+m_3)/(m_2+m_1))^{3/2} < a/c < 1$ and the derivative $dB/db$ only takes positive values.
Now $B$ is asymptotically $(m_1+m_3)(a-c) b^{-3/2}$ near $b = 0$ and 
$(m_1+m_2)c^{-3/2}b  - (m_2+m_3)a^{3/2}b$ near $b = +\infty$.
Thus along the long family $dB/db > 0$ while along both short families $dB/db < 0$.

Now the derivative along a family can be computed by implicit differentiation as
$da/db = -(dB/db)/(dB/da)$. We have just shown that the signs of the derivatives
of $B$ in the $a$- and the $b$-direction have opposite signs along all three families,
and hence the result follows.
\end{proof}

\section{Minima of the Hamiltonian}

Now we are going to prove that the Hamiltonian is bounded below when the angular momentum matrix $L$ 
is of rank 4.

\begin{proposition}\label{pro:bounded} 
Given three positive masses $m_1, m_2, m_3$ and a $4\times 4$ antisymmetric matrix $L$ of rank 4, consider the 3-body problem in $\mathbb{R}^4$ with these masses, and consider in the phase space the submanifold of states with angular momentum $L$. On this submanifold the energy $H$ is bounded below by a negative number $H_0$.
\end{proposition}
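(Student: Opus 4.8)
The plan is to pass to the centre-of-mass frame and then to Jacobi coordinates, where the nonvanishing of the Pfaffian forces a quantitative lower bound on the kinetic energy near \emph{every} binary collision. First I would fix $\sum m_i q_i = 0$ and $\sum p_i = 0$; reinstating the centre-of-mass motion only adds a nonnegative term to $H$ for a given internal angular momentum, so it suffices to bound the internal energy from below. In the centre-of-mass frame introduce Jacobi vectors $x_1, x_2 \in \R^4$ with conjugate momenta $y_1, y_2$ and reduced masses $\mu_1, \mu_2$, so that $T = \tfrac12 |y_1|^2/\mu_1 + \tfrac12 |y_2|^2/\mu_2$ and the angular-momentum $2$-form is $\omega_L = x_1 \wedge y_1 + x_2 \wedge y_2$. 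Since the Pfaffian satisfies $\tfrac12\,\omega_L \wedge \omega_L = \pf\cdot \mathrm{vol}$ and the diagonal terms $x_k \wedge y_k \wedge x_k \wedge y_k$ vanish, one gets $\pf\cdot\mathrm{vol} = (x_1 \wedge y_1) \wedge (x_2 \wedge y_2)$.

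The key step is the inequality $|\pf| \le |x_1 \wedge y_1|\,|x_2 \wedge y_2|$, which follows from $\alpha \wedge \beta = \langle \alpha, \star\beta\rangle\,\mathrm{vol}$ and Cauchy--Schwarz on $2$-forms (using $|\star\beta| = |\beta|$). Writing $\alpha = x_1 \wedge y_1$, $\beta = x_2 \wedge y_2$ and using $|\beta| \le |\omega_L| + |\alpha| = \ell + |\alpha|$, this becomes $|\pf| \le |\alpha|(\ell + |\alpha|)$, whence $|\alpha| = |x_1 \wedge y_1| \ge s_0 := \tfrac12\bigl(-\ell + \sqrt{\ell^2 + 4|\pf|}\,\bigr)$. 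This is exactly where rank $4$ enters: $s_0 > 0$ precisely because $\pf \neq 0$. Since $|x_1 \wedge y_1| \le |x_1|\,|y_1|$ and $|x_1| = d_{12}$, we obtain $|y_1| \ge s_0/d_{12}$ and therefore $T \ge s_0^2/(2\mu_1 d_{12}^2)$.

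The constant $s_0$ depends only on the invariants $\ell$ and $|\pf|$ of $L$, not on the choice of Jacobi decomposition, so repeating the construction with Jacobi coordinates adapted to each of the three pairs gives $T \ge s_0^2/(2\mu_{ij}\, d_{ij}^2)$ for every pair $(i,j)$, with $\mu_{ij} = m_i m_j/(m_i + m_j)$. Hence $T \ge c/d_{\min}^2$ with $d_{\min} = \min_{i<j} d_{ij}$ and $c = s_0^2/(2\max_{i<j}\mu_{ij}) > 0$. On the other hand $V \ge -M_2/d_{\min}$ since every $d_{ij} \ge d_{\min}$. Combining, $H = T + V \ge c\, d_{\min}^{-2} - M_2\, d_{\min}^{-1}$, a quadratic in $d_{\min}^{-1}$ bounded below by $H_0 := -M_2^2/(4c) < 0$; this is the desired bound.

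I expect the main obstacle to be the step establishing the uniform lower bound $|x_1 \wedge y_1| \ge s_0$: one must argue purely from the fixed value of $L$ (through its two invariants $\ell$ and $\pf$) that no single Jacobi pair can carry a vanishingly small oriented area, and then check that the resulting constant is genuinely pair-independent. A secondary point requiring care is the reduction to the centre-of-mass frame, i.e.\ verifying that splitting off the centre-of-mass motion does not allow $H$ to decrease below the internal bound on the prescribed level set of $L$.
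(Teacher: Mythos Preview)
Your argument is correct and follows the same skeleton as the paper's: pass to Jacobi coordinates, write $L$ as a sum of two decomposable bivectors, use the rank-$4$ hypothesis to bound each summand away from zero, and conclude that the kinetic term $|y_1|^2/(2\mu_1)\ge s_0^2/(2\mu_1 d_{12}^2)$ dominates the Coulomb singularity. The differences are in execution rather than strategy. The paper obtains the lower bound on $|x_1\wedge y_1|$ by the abstract observation that it is at least the distance $d_L$ from $L$ to the cone of rank-$2$ bivectors, and then finishes by contradiction; you instead extract an explicit constant $s_0=\tfrac12(-\ell+\sqrt{\ell^2+4|\pf|})$ via the Pfaffian identity $\pf\cdot\mathrm{vol}=(x_1\wedge y_1)\wedge(x_2\wedge y_2)$ and Cauchy--Schwarz, and then minimise $c\,d_{\min}^{-2}-M_2\,d_{\min}^{-1}$ directly to produce an explicit $H_0$. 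Your $s_0$ is slightly weaker than the optimal value $d_L=\min(\mu_1,\mu_2)$ the paper records in its remark (one checks $s_0\le\min(\mu_1,\mu_2)$ from $\ell^2=\mu_1^2+\mu_2^2$, $|\pf|=\mu_1\mu_2$), but it is manifestly positive exactly when $\pf\ne 0$, which is all that is needed. The concern you flag about the centre-of-mass reduction is not an issue here: the paper works throughout in the centre-of-mass frame, so the total and internal angular momenta coincide.
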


{\bf Remark.} Recall that a motion of the 3-body problem has a dimension which is one (rectilinear motion), two (planar, non-rectilinear motion), three (spatial, non-planar motion) or four (non-spatial motion). The hypothesis about the rank of $L$ is motivated by this observation: In the 3-body problem, the motion is of dimension 4 if and only if the angular momentum $L$ is of rank 4.
\medskip

{\bf A preliminary lemma about relative equilibria.} To each balanced configuration (with a given size) is associated a motion of relative equilibrium which may be described as follows. In this first discussion, we do not need to consider the exceptional case of the configuration with round ellipse of inertia. The configuration uniformly rotates around each of its axis of inertia. There is a decomposition of $\R^4$ of the form $\R^2\oplus\R^2$ such that each axis of inertia rotates uniformly in each $\R^2$, one with angular velocity $\omega_1$, the other with angular velocity $\omega_2$. In this frame, the matrix  in (\ref{eqn:releq}) is
\[
    \Omega = \begin{pmatrix}
    0    & -\omega_1 & 0 &  0 \\
    \omega_1 & 0     & 0 & 0 \\
   0 & 0 & 0   & -\omega_2 \\
    0 & 0 & \omega_2 & 0
    \end{pmatrix}.
\]
The values of $\omega_1$ and $\omega_2$ are uniquely defined,  up to sign, by equation (\ref{eqn:releq}).

\begin{lemma}\label{lem:C}
The dimensionless momentum $k$ goes to zero at both endpoints of each of the three curves of balanced configurations described in Corollary~\ref{cor:T}.
\end{lemma}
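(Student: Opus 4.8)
My plan is to compute $k$ explicitly in the canonical frame supplied by the preliminary lemma and then take the limit at each of the two types of endpoints listed in Corollary~\ref{cor:T}. In that frame $\Omega$ is block-diagonal with frequencies $\omega_1,\omega_2$ and the inertia tensor is $S=\operatorname{diag}(\Theta_1,0,\Theta_2,0)$ along the two principal axes, so a direct computation of $L=S\Omega+\Omega S$ gives its eigenvalues as $\mu_1=\Theta_1\omega_1$ and $\mu_2=\Theta_2\omega_2$. Hence
\[
k=\frac{\mu_1\mu_2}{(\mu_1+\mu_2)^2}=\frac{\Theta_1\Theta_2\,\omega_1\omega_2}{(\Theta_1\omega_1+\Theta_2\omega_2)^2},
\]
and using the planar identity $\Theta_1\Theta_2=\tfrac{4M_3}{M}A^2$ (which reduces to the relation $4\Theta_1\Theta_2=3r^4M_3/M$ used in the proof of Theorem~\ref{thm:equilateral}) the numerator is proportional to $A^2\omega_1\omega_2$. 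It therefore suffices to show that at each endpoint the numerator tends to $0$ while the denominator $(\mu_1+\mu_2)^2$ stays bounded away from $0$.

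At an Euler endpoint the configuration limits to a genuine collinear central configuration, whose relative equilibrium is an ordinary planar rotation of finite, nonzero frequency. Thus the frequencies stay bounded, one moment of inertia, say $\Theta_2$, tends to $0$ while $\Theta_1$ and its frequency remain positive. Consequently $A\to0$, so the numerator $\propto A^2\omega_1\omega_2\to0$, whereas $\mu_1=\Theta_1\omega_1$ stays bounded away from $0$ and hence so does $(\mu_1+\mu_2)^2$. This gives $k\to0$ at once; it is the easy case, requiring only continuity of the frequencies along the family as it crosses the ellipse $A=0$.

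At a collision endpoint two bodies, say $2$ and $3$, approach each other; set $\epsilon=d_{23}\to0$. The delicate point is that $A\to0$ here as well but one frequency blows up, so the two effects must be weighed against each other. Subtracting the relative-equilibrium equations \eqref{eqn:releq} for bodies $2$ and $3$ and retaining the dominant $O(\epsilon^{-2})$ term shows that $\xi_2-\xi_3$ is, to leading order, an eigenvector of $\Omega^2$ with eigenvalue $-(m_2+m_3)/\epsilon^{3}$, so the frequency along the collision axis satisfies $\omega_1\sim\epsilon^{-3/2}$. On the other hand the moment of inertia about that same axis is carried entirely by the tight binary and equals $\Theta_1\sim\tfrac{m_2m_3}{m_2+m_3}\,\epsilon^{2}\to0$. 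Therefore $\mu_1=\Theta_1\omega_1\sim\epsilon^{1/2}\to0$: the quadratic vanishing of $\Theta_1$ beats the $\epsilon^{-3/2}$ growth of $\omega_1$.

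Meanwhile the complementary motion is that of body $1$ orbiting the binary regarded as a single mass $m_2+m_3$, an effective two-body relative equilibrium at distance $O(1)$, so the transverse frequency $\omega_2$ and moment of inertia $\Theta_2$ both stay $O(1)$ and $\mu_2$ is bounded away from $0$. Hence $\mu_1+\mu_2$ is bounded away from $0$ and $k=\mu_1\mu_2/(\mu_1+\mu_2)^2\sim\epsilon^{1/2}\to0$. The main obstacle is precisely this collision analysis: one must justify that along the family $\omega_1^2\epsilon^{3}$ and $\Theta_1/\epsilon^{2}$ converge to finite nonzero constants, i.e.\ control the subleading corrections as body $1$ exerts an $O(1)$ perturbation on the binary, and confirm that $\xi_2-\xi_3$ aligns with a principal axis of inertia in the limit, which it does by the symmetry $b\to c$ of the endpoint $[0,1,1]$.
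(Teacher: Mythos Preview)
Your argument is essentially correct, but it follows a different route from the paper's. You split the two endpoint types and compute explicit asymptotics (at the collision: $\omega_1\sim\epsilon^{-3/2}$, $\Theta_1\sim\epsilon^{2}$, hence $\mu_1\sim\epsilon^{1/2}$); this is in the spirit of \cite{DS19}, to which the paper explicitly defers for ``precise estimates''. The paper instead runs a single contradiction argument that handles Euler and collision endpoints simultaneously: normalising $I=1$ and letting $\epsilon$ be the maximal distance from the short principal axis to the three bodies, it assumes $\mu_1\not\to0$, which forces $\omega_1\sim\epsilon^{-2}$; then the short-axis component of the left side of \eqref{eqn:releq} is of order $\epsilon^{-3}$, while a short geometric observation (the product-of-inertia condition $\sum m_ix_iy_i=0$ forces the short axis to pass \emph{between} the two bodies of any tight binary, so no mutual distance is smaller than order $\epsilon$) caps the right side at $\epsilon^{-2}$. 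The paper's proof is shorter and uniform; yours is longer but delivers rates.

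Two steps in your write-up deserve tightening. In the Euler case, ``continuity of the frequencies'' is not quite sufficient: at the collinear limit only the in-line frequency is determined by \eqref{eqn:releq}, so the boundedness of the transverse $\omega_2$ along the family is not automatic. It does hold---for instance from the identity $\omega_2^2\Theta_2=\sum_{i<j}m_im_j(y_i-y_j)^2/d_{ij}^3$, where numerator and $\Theta_2$ are both of exact order $\epsilon^2$ while all $d_{ij}$ stay bounded below---but this needs to be said. In the collision case, your appeal to the $b\leftrightarrow c$ symmetry of the endpoint $[0,1,1]$ is the right intuition but not yet a proof, since the triangles along the approaching family are not isosceles. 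A clean way to bypass the alignment issue and get $\Theta_1\lesssim\epsilon^{2}$ directly is to combine your estimate $\omega_1^2\sim\epsilon^{-3}$ with the virial identity $\omega_1^2\Theta_1+\omega_2^2\Theta_2=-V\sim\epsilon^{-1}$.
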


\begin{proof} Let the configurations satisfy the normalisation $I=1$. We denote by $\Theta_1$ and $\Theta_2$ the moments of inertia of the triangle, satisfying $I=\Theta_1+\Theta_2$. According to Corollary~\ref{cor:T}, we may arrive at a collinear Euler configuration or at a configuration with one distance going to zero.  In both cases, the squared area $A^2=M\Theta_1\Theta_2/4M_3$ tends to zero. An axis of inertia is such that the max $\epsilon$ of its three distances to each of the three bodies  tends to zero. We claim that the angular momentum $\mu_1$ concerning the rotation around this axis tends to zero, while the other angular momentum $\mu_2$ remains finite and bounded away from zero. This implies $k\to 0$. Let us assume that $\mu_1$ does not tend to zero while $\epsilon\to 0$. Then, for a sequence of configurations going to an Euler configuration or a collision, the angular velocity $\omega_1$ is of order $\epsilon^{-2}$, since $\mu_i=\omega_i\Theta_i$ and since $\Theta_1$ is of order $\epsilon^2$.  The left-hand side of  (\ref{eqn:releq}) for the first $\R^2$ would be of order $\epsilon^{-3}$. But we claim that the right-hand side can be at most of order $\epsilon^{-2}$.  Indeed, we need a small binary to get a big right-hand side. If this small binary is above the axis, the third body is below since the centre of mass is on the axis. This contradicts the equation $m_1x_1y_1+m_2x_2y_2+m_3x_3y_3=0$ satisfied by the coordinates $(x_i,y_i)$ of the bodies in the axes of inertia. So the axis passes between the two bodies of the small binary. Due to the centre of mass condition, at least two of the three $y_i$'s are of order $\epsilon$. Consequently the smallest mutual distance is of order at least $\epsilon$. The right-hand side of (\ref{eqn:releq}) is of order at most $\epsilon^{-2}$ and we have a contradiction.
\end{proof}

{\bf Remark.} See \cite{DS19} for precise estimates on relative equilibria at these endpoints.
\medskip

{\bf Some classical formulas.} We introduce the Jacobi vectors (their main property is already presented by Lagrange in his {\it M{\'e}chanique Analitique} \cite[p.\ 292]{Lagrange1788})
$$q=q_2-q_1,\quad Q=q_3-(m_1 q_1+m_2 q_2)/(m_1+m_2)$$
We always assume $m_1\dot q_1+m_2\dot q_2+m_3\dot q_3=0$. Twice the kinetic energy is $$2T=\frac{m_1m_2}{m_1+m_2}\|\dot q\|^2+\frac{m_3(m_1+m_2)}{ m_1+m_2+m_3} \|\dot Q\|^2$$
We set $$\mu=\frac{m_1m_2}{m_1+m_2},\quad \nu=\frac{m_3(m_1+m_2)}{m_1+m_2+m_3},\quad p=\mu\dot q,\quad P=\nu\dot Q.$$
The Hamiltonian is now
$$H=\frac{|p|^2}{2\mu}+\frac{|P|^2}{2\nu}-\frac{m_1m_2}{ d_{12}}-\frac{m_2m_3}{ d_{23}}-\frac{m_1m_3}{ d_{13}}.$$
A short  computation gives the following expression of the angular momentum
\begin{equation} \label{eqn:1}
   L = qp^t - pq^t + QP^t - P Q^t.
\end{equation}
We can of course also express $L$ with wedge products
\begin{equation} \label{eqn:2}
L=\mu q\wedge \dot q+\nu Q\wedge \dot Q,
\end{equation}
and consider the space of $4\times 4$ antisymmetric matrices as the space of bivectors,
endowed with the usual Euclidean form, which is half of the Euclidean form on the space of matrices, and which is such that 
\begin{equation} \label{eqn:3}
|q\wedge\dot q|^2= |q|^2|\dot q|^2-\langle q,\dot q\rangle^2.
\end{equation}

{\bf A standard estimate.} By hypothesis $L$ is of rank 4. But $\mu q\wedge\dot q$ and $\nu Q\wedge\dot Q$ are of rank 2. Given an $L$ of rank 4, the two terms in (\ref{eqn:2}) are bounded away from zero. There is a $d_L>0$ such that $|\mu q\wedge\dot q|\geq d_L$. As a simple-minded and non-optimal choice, we may choose this $d_L$ as being the distance in the space of bivectors between $L$ and the set of bivectors of rank 2, which contains $\nu Q\wedge\dot Q$. Combining \eqref{eqn:3}, $d_{12}=|q|$ and $p=\mu\dot q$ we get
\begin{equation} \label{eqn:4}
|p|\geq \frac{d_L}{d_{12}}.
\end{equation}

\begin{proof} (\emph{of Proposition~\ref{pro:bounded}})
If $H$ were not bounded below, there would exist a sequence of states in the submanifold where one of the three negative terms in $H$ tends to $-\infty$. By extraction, there would be another sequence such that the most negative term of the three is always the same term.  By renumbering the bodies, we say that $-m_1m_2/d_{12}$ is this term.  But $H$ has the term  $|p|^2/2\mu$ where $|p|^2\geq d_L^2/d_{12}^2$ goes to $+\infty$ faster than the negative terms. So, $H$ goes to $+\infty$, which is a contradiction. 
\end{proof}

{\bf Remark.} There is an orthonormal frame $(e_1,e_2,e_3,e_4)$, a $\mu_1>0$ and a $\mu_2>0$ such that $L=\mu_1 e_1\wedge e_2+\mu_2 e_3\wedge e_4$. The optimal choice for $d_L$ is well defined as $\min (\mu_1,\mu_2)$.
\medskip

Recall that an {\it integral manifold} is a submanifold of the phase space obtained by fixing the first integrals. On a submanifold defined by fixing $L$ to a rank 4 value, the minima of the energy $H$ correspond to isolated balanced configurations: If not, we would have by analyticity all a curve of balanced configurations with same $L$, which would contradict Lemma~\ref{lem:C}. By fixing $L$ and then $H$ slightly above such a minimum, we get the following result. 

\begin{theorem}\label{thm:sta}
There are compact integral manifolds in the 4-dimensional 3-body problem.  
There are relative equilibria whose motion is 4-dimensional 
which are nonlinearly stable in the fully reduced system. 
\end{theorem}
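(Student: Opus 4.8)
The plan is to assemble the pieces already established and show that they combine into the two assertions of Theorem~\ref{thm:sta}. First I would fix a value of the angular momentum $L$ of rank $4$ and consider the integral manifold $\Sigma_{L,H_0+\varepsilon}$ obtained by fixing both $L$ and the energy at a value slightly above a global minimum $H_0$ of $H$ on the level set $\{L=\text{const}\}$. By Proposition~\ref{pro:bounded} the energy is bounded below on the $L$-level set, so a global minimum $H_0$ exists; I would want to argue that it is actually attained. The key obstacle here is compactness of the minimising set: a minimising sequence might escape to infinity or to a collision. The estimate \eqref{eqn:4}, namely $|p|\ge d_L/d_{12}$ together with its analogue $|P|\ge d_L'/d_{13}$ (or the corresponding bound for the second Jacobi pair), rules out the collision escape exactly as in the proof of Proposition~\ref{pro:bounded}: as any mutual distance shrinks, the kinetic energy forces $H\to+\infty$. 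To rule out escape to infinity at fixed $L$, I would use that $L=\mu\,q\wedge\dot q+\nu\,Q\wedge\dot Q$ has bounded norm while each wedge term is bounded below in rank, which constrains the product of size and velocity in each Jacobi plane; combined with the bound $H\le H_0+\varepsilon$ this should confine the configuration to a compact region. This gives the first sentence: the integral manifold is compact.

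Next I would identify the minimiser with a relative equilibrium. Since $H_0$ is a minimum of $H$ on a fixed $L$-level, the minimiser is a critical point of $H$ constrained to $L=\text{const}$, and by the standard Lagrange-multiplier / symplectic-reduction argument a constrained critical point of the energy on an angular-momentum level is precisely a relative equilibrium, i.e.\ a solution of \eqref{eqn:releq} for the appropriate $\Omega$ built from $L$. The remark immediately preceding the theorem already records that on a rank-$4$ level set the minima correspond to \emph{isolated} balanced configurations: if the minimum were attained along a whole curve of balanced configurations sharing the same $L$, then by analyticity one would have an entire family at fixed $L$, but Lemma~\ref{lem:C} shows $k\to0$ at the endpoints of each family of balanced configurations, whereas $k$ is a fixed nonzero value determined by the rank-$4$ matrix $L$ via $k=\mu_1\mu_2/(\mu_1+\mu_2)^2$ with $\mu_1,\mu_2>0$. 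Hence the minimiser is isolated, and because the motion genuinely uses both rotation frequencies $\omega_1,\omega_2$ (the decomposition $\R^2\oplus\R^2$ in the preliminary lemma), the relative equilibrium is four-dimensional rather than planar.

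Finally I would establish nonlinear (Lyapunov) stability in the fully reduced system. The reduced phase space is the quotient of the $L$-level set by the isotropy subgroup of $L$ in $SO(4)$, and the relative equilibrium descends to an equilibrium point of the reduced Hamiltonian $\bar H$. Since $H_0$ is a strict local minimum of $H$ on the $L$-level set and is isolated, its image is a strict local minimum of $\bar H$ on the reduced space. A strict local minimum of a conserved quantity is a Lyapunov-stable equilibrium by the Dirichlet--Lagrange argument: for small $\varepsilon$ the sublevel set $\{\bar H\le H_0+\varepsilon\}$ is a compact neighbourhood invariant under the reduced flow, trapping nearby initial conditions for all time. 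This simultaneously proves boundedness of the nearby motions and the stability claim. The step I expect to be hardest is making the minimum \emph{strict} and verifying that no degeneracy of the reduced Hessian spoils Lyapunov stability: one must confirm that the minimiser is a genuine local minimum transverse to the group orbit directions, i.e.\ after quotienting by the full isotropy action, which is where the isolation from Lemma~\ref{lem:C} and the precise structure of the reduction in \cite{DS19,Dullin13} do the essential work.
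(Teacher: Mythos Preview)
Your proposal is correct and follows essentially the same route as the paper: combine Proposition~\ref{pro:bounded} (energy bounded below on a rank-$4$ level of $L$) with the analyticity-plus-Lemma~\ref{lem:C} argument that the minimisers are isolated balanced configurations, then invoke the Dirichlet--Lagrange principle to conclude compactness of nearby integral manifolds and Lyapunov stability in the reduced system. The paper's own argument is the short paragraph immediately preceding the theorem statement; you have simply unpacked it, making explicit the Lagrange-multiplier identification of the minimiser with a relative equilibrium and the worry about attainment/strictness, which the paper leaves implicit.
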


{\bf Related results.} From estimate \eqref{eqn:4} we deduce as easily that the distance $d_{12}$ is bounded away from zero on any integral manifold of the 3-body problem defined by a rank 4 value of the angular momentum $L$ and an arbitrary value of the energy $H$. As a corollary, we get:

\begin{proposition}
There are no collisions in a motion of the 3-body problem which is 4-dimensional.
\end{proposition}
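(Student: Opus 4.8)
The plan is to upgrade the instantaneous estimate \eqref{eqn:4} into a uniform positive lower bound on all three mutual distances, valid on a single integral manifold, and then to invoke conservation of $L$ and $H$ to rule out collisions along a motion. First I would recall that, by the Remark following Proposition~\ref{pro:bounded}, a motion is 4-dimensional exactly when $L$ has rank 4; since $L$ is a first integral its rank is constant in time, so a 4-dimensional motion has $L$ of rank 4 at every instant, while the energy $H$ takes one fixed value $E$.

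Next I would convert the energy identity into a bound on the smallest distance. Fix a state on the manifold $\{L=\text{fixed rank }4,\ H=E\}$ and let $d=\min(d_{12},d_{23},d_{13})$. The constant $d_L$ in \eqref{eqn:4} depends only on $L$ (it is the distance from $L$ to the rank-2 bivectors, or optimally $\min(\mu_1,\mu_2)$) and not on the labelling of the bodies, so after renumbering the bodies so that the closest pair is $(1,2)$ the Jacobi estimate gives $|p|\geq d_L/d_{12}=d_L/d$, whence $T\geq |p|^2/(2\mu)\geq d_L^2/(2\mu d^2)$. At the same time each potential term obeys $-m_im_j/d_{ij}\geq -m_im_j/d$, so that $V\geq -M_2/d$. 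Combining these,
\[
E=T+V\geq \frac{d_L^2}{2\mu\,d^2}-\frac{M_2}{d}.
\]
Because the right-hand side tends to $+\infty$ as $d\to 0$, there is a $d_0>0$, depending only on the masses, on $E$ and on $L$, with $d\geq d_0$ at every state of the manifold.

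Finally I would exclude collisions. A collision at time $t^\ast$ would force $d_{ij}(t)\to 0$ for some pair as $t\to t^\ast$; but for all $t<t^\ast$ the state lies on the integral manifold with $L$ of rank 4 and $H=E$, on which every distance is bounded below by $d_0>0$. This contradiction shows that no such $t^\ast$ can exist, so a 4-dimensional motion has no collisions.

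The main obstacle I anticipate is bookkeeping rather than conceptual: one must verify that the constant $d_L$ is genuinely intrinsic to $L$ and therefore survives the relabelling of bodies used to move an arbitrary colliding pair into the role of $(1,2)$ in the Jacobi splitting, and that the reduced mass $\mu$ produced by relabelling stays within a fixed finite range (it takes only the three values associated with the three pairs), so that the threshold $d_0$ can be chosen uniformly over all pairs. Granting these points, the statement follows directly from the standard estimate already in hand.
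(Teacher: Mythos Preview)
Your proposal is correct and follows the same route as the paper, which simply notes that estimate~\eqref{eqn:4} forces $d_{12}$ to be bounded away from zero on any integral manifold with $L$ of rank~4 and fixed $H$, and then states the proposition as a corollary. Your write-up fills in the details the paper leaves implicit (the comparison $d_L^2/(2\mu d^2)$ versus $-M_2/d$, and the uniformity of $d_L$ and $\mu$ under relabelling of the closest pair), and your bookkeeping concerns are well-founded and correctly resolved: $d_L$ is indeed intrinsic to $L$ (it is the distance from $L$ to the cone of rank-$2$ bivectors, optimally $\min(\mu_1,\mu_2)$), and the reduced mass takes only three values, so a single $d_0$ works for all pairs.
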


The no-binary-collision result is related to a no-syzygy result which is easier: the latter implies the first in the case of a 3-body problem with a potential which is non-singular at the collision. A {\it syzygy} is a state of the three body problem with the three bodies on a same line. In Jacobi vectors, a syzygy is characterised by the proportionality $q=\lambda Q$ for some $\lambda\in \mathbb{R}$. Considering expression \eqref{eqn:2}, we see that $Q$ factors out and $L$ is then of rank 2. So

\begin{proposition}
There are no syzygies in a motion of the 3-body problem which is 4-dimensional.
\end{proposition}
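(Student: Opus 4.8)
The final statement asserts that a motion of the 3-body problem which is 4-dimensional contains no syzygies. The plan is to argue by contrapositive: I would show that the presence of a syzygy forces the angular momentum to have rank at most 2, which (by the Remark following Proposition~\ref{pro:bounded}, identifying dimension 4 with rank 4 of $L$) contradicts the motion being 4-dimensional. The proof is therefore essentially a rank computation on expression \eqref{eqn:2}.

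First I would recall the Jacobi-vector characterisation of a syzygy. Three bodies lie on a common line precisely when the differences $q_2-q_1$ and $q_3-(m_1q_1+m_2q_2)/(m_1+m_2)$ are parallel, i.e. when the Jacobi vectors satisfy $q=\lambda Q$ for some scalar $\lambda\in\mathbb{R}$ (allowing $Q=0$ as a degenerate sub-case, which collapses the configuration even further). I would verify briefly that collinearity of the three position vectors is equivalent to this proportionality, since $q$ and $Q$ span the same plane as the three bodies modulo the centre of mass.

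Next, the key algebraic step: substitute $q=\lambda Q$ into \eqref{eqn:2}. Then
\[
L=\mu q\wedge\dot q+\nu Q\wedge\dot Q
=\mu(\lambda Q)\wedge\dot q+\nu Q\wedge\dot Q
=Q\wedge(\mu\lambda\,\dot q+\nu\,\dot Q),
\]
so $Q$ factors out and $L$ is a single decomposable bivector, hence of rank at most $2$. (If $Q=0$ then $L=0$, still rank at most $2$.) The point is that the two rank-$2$ terms in \eqref{eqn:2} no longer span a four-dimensional bivector because their first factors are collinear, collapsing the wedge sum onto the single plane through $Q$.

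Finally I would close the argument. Since a 4-dimensional motion has, by the Remark, angular momentum $L$ of rank~$4$, and since $L$ is conserved along the motion, a syzygy at any instant would force rank at most $2$ at that instant, contradicting rank $4$. Hence no syzygy can occur. I do not expect any genuine obstacle here: the computation is a one-line factorisation of the bivector. The only point requiring minor care is the bookkeeping of which bivectors count as rank $2$ — namely, that a wedge $Q\wedge w$ of two vectors is always either zero or decomposable, hence of rank at most $2$ — but this is immediate from the definition of the rank of an antisymmetric matrix as twice the rank of the associated bivector's factorisation.
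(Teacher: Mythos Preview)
Your argument is correct and is essentially identical to the paper's: the paper also characterises a syzygy by $q=\lambda Q$, substitutes into \eqref{eqn:2}, factors out $Q$, and observes that $L$ then has rank at most~$2$. Your version simply spells out the factorisation and the degenerate case $Q=0$ more explicitly.
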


These results may be compared to classical results about the 3-body problem in the usual dimensions (see \cite[\S 326--7]{Wintner41}). If the angular momentum is zero, the motion is planar or rectilinear. In a 3-dimensional motion, the angular momentum is a non-zero vector. The plane orthogonal to it is called the invariable plane. {\it In a 3-dimensional motion, the three bodies are all in the invariable plane if and only if there is a syzygy.}

The no-collision result happens in other situations, where the energy is also bounded below. The isosceles 3-body problem in dimension 3 with non-vanishing angular momentum along the symmetry axis is an example. The 4-body problem in dimension 6 and the 5-body problem in dimension 8 are other examples. Another classical result was stated by Weierstrass and proved by Sundman (see \cite{Sundman12}): if in the 3-body problem there is a triple collision, then the angular momentum is zero.

\begin{figure} 
\includegraphics[height=6cm]{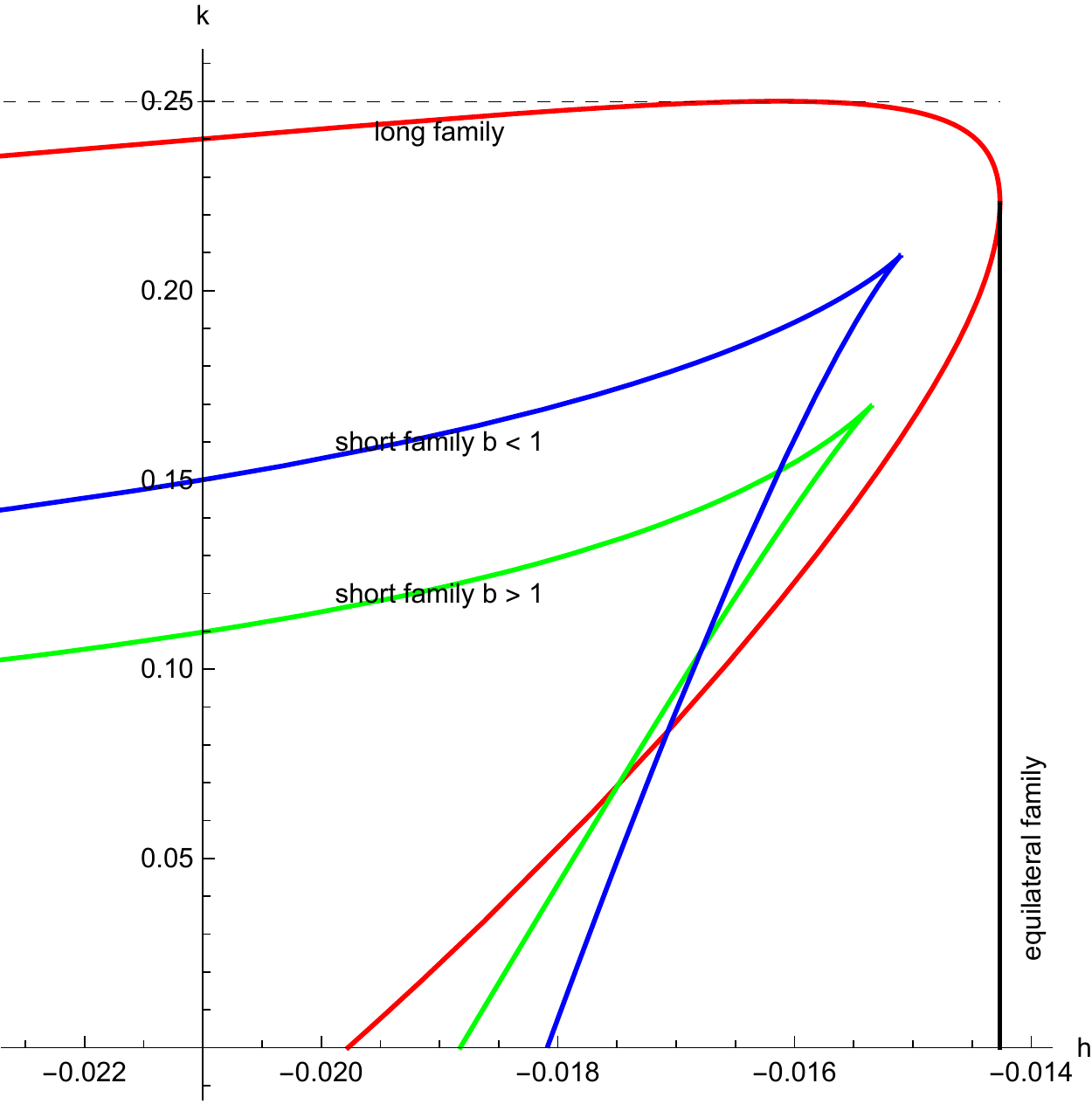}
\includegraphics[height=6cm]{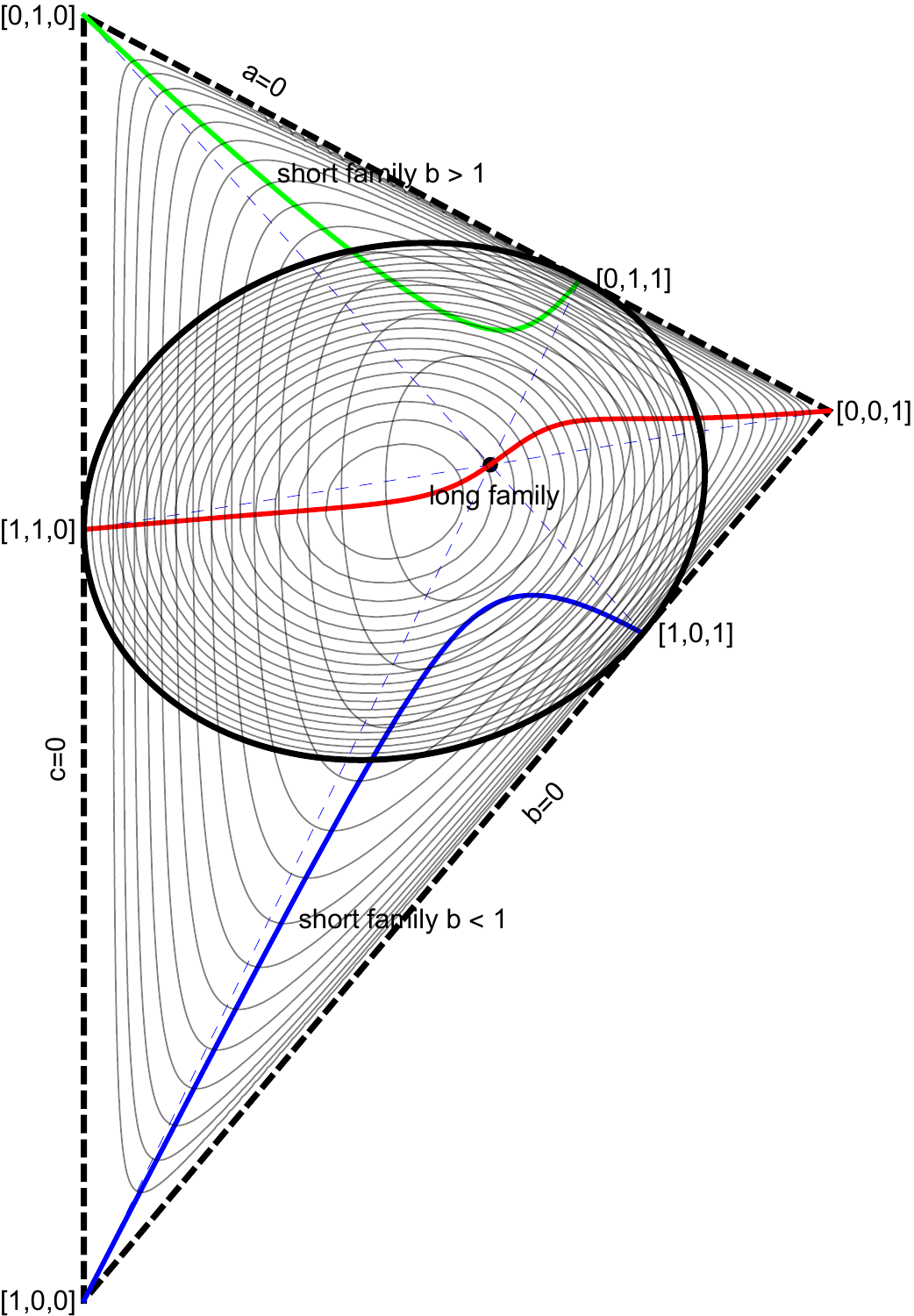}
\caption{Three distinct masses $(m_1, m_2, m_3) = (3,2,1)/6$.
The Lagrange equilateral family is the vertical line, not extending 
all the way to $k = 1/4$. 
The three non-equilateral families emerge from Euler's collinear configurations at $k=0$
and for $h \to -\infty$ approach collision configurations.
The two short families have a cusp each.
The long family emerges at the Euler collinear configuration with the smallest energy, 
then touches the endpoint of the equilateral family, and then is tangent to the maximum at $k=1/4$.
Past this tangency it corresponds to minimal energy at fixed $k$ and hence is non-linearly stable.
} \label{fig:gen1}
\end{figure}

\section{Frequencies in the energy-momentum map}

Relative equilibria are best presented in an energy-momentum map. 
Because of the scaling symmetry of the 3-body problem it is useful to 
consider a scaled energy that is invariant under the scaling, and also 
a combined momentum invariant that is scale invariant, as was introduced
in \eqref{eqn:hk}. In general when considering an energy-momentum map 
the slope of the graph of the energy as a function of the conserved momentum 
gives the frequencies. In our case this relation is somewhat unusual,
and is the content of this section. The expression for the slope of the
$(h,k)$-curves will allow us to deduce some consequences for the structure
of these curves.

Before we prove the formula for $dk/dh$ along a family of relative equilibria
let us describe the image of the scaled energy-momentum map as obtained from a numerical study. Part of this description will be later confirmed by general mathematical statements. We are considering the generic case of distinct masses with 
$m_1 > m_2 > m_3$.

Along each of the three smooth families the curve  \eqref{eqn:hk} is computed. 
The resulting three curves are shown in Fig.~\ref{fig:gen1}, 
with the same colours for the long and short families as in Fig.~\ref{fig:shapetriangle}.
Only parts of the families inside the ellipse $A=0$ have a point in the image. 
Each family has a point with $A=0$, the collinear Euler configuration, 
which has $k=0$, because the Pfaffian vanishes for 3-dimensional 
momentum $L$.

The short families have larger energy at the initial point as Euler collinear configurations. 
Each short family has a cusp in the energy-momentum map. 
Past the cusp the solution branch approaches $k =0 $  again while $h \to -\infty$. 
The long family starts out as the Euler configuration with the smallest energy and 
extends all the way to the energy of the Lagrange central configurations. There the 
family has a maximum in $h$. Simple considerations on the inverse problem prove that from there the balanced configurations of this family are acute triangles. Following the family there is a maximum at 
$k = 1/4$. This is the absolute maximum possible for $k$.
Past the tangency with $k = 1/4$ the long 
family is the global minimum of the Hamiltonian (for fixed $k$).

Cases with equal masses are limiting cases of generic situation, but the limits are non-trivial 
since some branches coalesce. In the most symmetric case with all masses equal 
there is only a single solution branch left. These special cases are described in the next section.

In order to prove the formula for $dk/dh$ we first need a result about Darboux coordinates
such that we can conclude that $\partial H/ \partial \mu_i = \omega_i$.
We consider all the balanced configurations (with all the sizes) and embed each of them  in $\R^2 \oplus \R^2$ as a relative equilibrium: an axis of inertia in the first $\R^2$, the other in the second, and the unique velocities making two counterclockwise uniform rotations that ``balance'' the gravitational attraction and maintain each axis of inertia in its respective plane $\R^2$.

In the particular case of an inertia matrix proportional to the identity, a pair of orthogonal axes which is fixed with respect to the configuration acts as the uniquely defined pair of axis of inertia of the general case.

Unconstraining the size of the configuration, the three smooth curves of Corollary~\ref{cor:T} define three smooth 2-dimensional semi-cones $\mathcal{B}_j\subset\R^3$, $j=1,2,3$. The possible positions of the configuration, together with the unique choice of velocities giving a counterclockwise motion of relative equilibrium, define three 4-dimensional submanifolds $\mathcal{M}_j=\mathcal{B}_j\times\T^2$  of the phase space. Here the projection on the torus $\T^2$ defines the position the triangle: two angles $\theta_1$ and $\theta_2$ locate the respective axis of inertia in each $\R^2$. These angles are defined for these particular motions, but not for general motions, since in general the plane of the triangle does not cut each $\R^2$ along a line, but only at the origin (the centre of mass).
We recall the classical formulas for the angular momentum eigenvalues: $\mu_1=\Theta_1\dot\theta_1$, $\mu_2=\Theta_2\dot\theta_2$, with $\omega_1=\dot\theta_1$, $\omega_2=\dot\theta_2$.  

\begin{proposition}\label{pro:Dar}
If $\mu_1,\mu_2,\theta_1,\theta_2$ are local coordinates on an open set $\mathcal{U}\subset \mathcal{M}_j$, $j=1,2$ or $3$, they are Darboux coordinates, i.e., the canonical symplectic form $\Phi$ of the 3-body problem, restricted to $\mathcal{U}$, is expressed as
$\Phi|_{\mathcal{U}}=d\mu_1\wedge d\theta_1+d\mu_2\wedge d\theta_2$.
\end{proposition}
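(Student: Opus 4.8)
The plan is to compute the restriction of the canonical form $\Phi=\sum_a dp^a\wedge dq^a+\sum_a dP^a\wedge dQ^a$ directly in the given chart, using the torus symmetry to pin down all but one of its components and a short orthogonality computation to pin down the last one. First I would fix an explicit parametrisation of $\mathcal{M}_j$. Writing $\R^4=\R^2\oplus\R^2$ and identifying the two axes of inertia of the planar configuration with the two $\R^2$ factors, the Jacobi vectors become $q=(q_1\cos\theta_1,q_1\sin\theta_1,q_2\cos\theta_2,q_2\sin\theta_2)$ and an analogous expression for $Q$ with internal components $Q_1,Q_2$; the \emph{same} pair of angles appears in both because the configuration is rigid. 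The relative-equilibrium velocity is purely tangential in each plane, $\dot q=(q_1\omega_1(-\sin\theta_1,\cos\theta_1),\,q_2\omega_2(-\sin\theta_2,\cos\theta_2))$, so that $p=\mu\dot q$, $P=\nu\dot Q$ are fixed by the point of $\mathcal{B}_j$. A one-line computation of $L=\mu q\wedge\dot q+\nu Q\wedge\dot Q$ then gives $L=\mu_1\,e_1\wedge e_2+\mu_2\,e_3\wedge e_4$ with $\mu_i=\Theta_i\omega_i$, the off-diagonal blocks cancelling precisely because of the principal-axis condition $\mu q_1q_2+\nu Q_1Q_2=0$. In particular $\mu_1,\mu_2$ are the restrictions to $\mathcal{M}_j$ of the globally defined matrix entries $L_{12},L_{34}$.

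Next I would invoke the torus action. The rotation of each $\R^2$ factor defines a Hamiltonian $\T^2$-action on phase space whose moment-map components are exactly $L_{12}$ and $L_{34}$, with generating vector fields $\partial_{\theta_1},\partial_{\theta_2}$; on $\mathcal{M}_j$ these restrict to $\mu_1,\mu_2$ and to the coordinate fields of the angles. The moment-map identity $\iota_{\partial_{\theta_i}}\Phi=-d\mu_i$ then fixes every component of $\Phi$ in the chart $(\mu_1,\mu_2,\theta_1,\theta_2)$ except the coefficient of $d\mu_1\wedge d\mu_2$: it forces $\Phi(\partial_{\theta_1},\partial_{\theta_2})=-\partial_{\theta_2}\mu_1=0$ and $\Phi(\partial_{\theta_i},\partial_{\mu_j})=-\delta_{ij}$, giving $\Phi=d\mu_1\wedge d\theta_1+d\mu_2\wedge d\theta_2+c\,d\mu_1\wedge d\mu_2$ with a single unknown $c=\Phi(\partial_{\mu_1},\partial_{\mu_2})$.

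It then remains to show $c=0$, i.e.\ that the shape-and-size directions (those tangent to $\mathcal{B}_j$ at fixed $\theta$) span a $\Phi$-isotropic plane; this is the one genuinely computational step, though a short one. A variation of the configuration at fixed angles moves $q$ and $Q$ \emph{radially} in each plane, $\delta q=\delta q_1(\cos\theta_1,\sin\theta_1,0,0)+\delta q_2(0,0,\cos\theta_2,\sin\theta_2)$, whereas $\delta p=\mu\,\delta(q_1\omega_1)(-\sin\theta_1,\cos\theta_1,0,0)+\cdots$ stays \emph{tangential}; since radial and tangential vectors are orthogonal in each plane, every pairing $\langle\delta p,\delta'q\rangle$, $\langle\delta P,\delta'Q\rangle$ vanishes, hence $\Phi(\delta,\delta')=0$. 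This yields $c=0$ and the stated formula, and a single evaluation on $(\partial_{\theta_1},\partial_{\mu_1})$ fixes the overall sign so that $\Phi=d\mu_1\wedge d\theta_1+d\mu_2\wedge d\theta_2$.

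The main obstacle is not any single calculation but the bookkeeping that makes the symmetry argument legitimate: one must verify that the \emph{intrinsic} eigenvalues $\mu_1,\mu_2$ of $L$ coincide on all of $\mathcal{M}_j$ with the moment-map entries $L_{12},L_{34}$ (which needs the block-diagonality above, hence the alignment of the axes of inertia with the two planes), and that the coordinate field $\partial_{\theta_i}$ of the chart $(\mu_1,\mu_2,\theta_1,\theta_2)$ is genuinely the infinitesimal generator of the action (true because holding $\mu_1,\mu_2$ fixed on $\mathcal{U}$ is the same as holding the shape fixed). The exceptional round-inertia configuration, where the axes of inertia are not unique, must be handled by the fixed-frame convention already described and, if necessary, excluded from $\mathcal{U}$ by continuity. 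A reader preferring to avoid the moment-map formalism may instead expand $\Phi$ directly in the chart $(q_1,q_2,Q_1,Q_2,\theta_1,\theta_2)$ and restrict to $\mathcal{B}_j$; the same radial/tangential orthogonality makes all unwanted terms cancel.
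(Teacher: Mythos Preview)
Your proposal is correct and follows essentially the same route as the paper's proof: use the Hamiltonian $\mathbb{T}^2$-action generated by $L_{12},L_{34}$ to identify $\partial_{\theta_i}$ with the coordinate field $Z_i$ and thereby fix five of the six components of $\Phi|_{\mathcal{U}}$, then kill the remaining coefficient $\Phi(\partial_{\mu_1},\partial_{\mu_2})$ by the radial/tangential orthogonality of position and momentum variations at fixed angles. The only cosmetic difference is that you carry the computation in Jacobi coordinates while the paper works with the full set of particle coordinates $(x_i,y_i)_{i=1}^{12}$; the vanishing mechanism for the last coefficient is literally the same (each term of $\sum dp^a\wedge dq^a$ evaluated on two shape variations is a product with one factor zero).
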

 
\begin{proof}
We recall that the {\it dual vector fields} of the coordinates $(\mu_1,\mu_2,\theta_1,\theta_2)$ are defined as the vector fields $(Y_1,Y_2,Z_1,Z_2)$ such that $\langle d\mu_i,Y_j\rangle=\delta_{ij}$, where $\delta$ is the Kronecker $\delta$, $\langle d\theta_i,Z_j\rangle=\delta_{ij}$, $\langle d\mu_i,Z_j\rangle=0$, $\langle d\theta_i,Y_j\rangle=0$.

Then, $(\mu_1,\mu_2,\theta_1,\theta_2)$ are Darboux coordinates if and only if after restriction to $\mathcal{U}$ the Hamiltonian vector field of $\mu_i$, $i=1$ or $2$, is $Z_i$ and the Hamiltonian vector field of $\theta_i$ is $-Y_i$.
The Hamiltonian flow of $\mu_1$ is the rotation in the first plane $\R^2$. The Hamiltonian flow of $\mu_2$ is the rotation in the second plane $\R^2$. The flow of $\mu_i$ leaves $\mathcal{M}_j$ invariant. There, it increases $\theta_i$ uniformly in such a way that $Z_1$ is the Hamiltonian vector field of $\mu_1$ and $Z_2$ is the Hamiltonian vector field of $\mu_2$.
The corresponding characterisation of the Hamiltonian vector field of $\theta_i$ cannot be obtained in such an easy way. 

Instead, we compute the symplectic form $\Phi$ of the phase space on the pairs of dual vector fields. We already have 5 coefficients among 6 by writing $\Phi(Z_i,X)=-\langle d\mu_i,X\rangle$, which gives the ``canonical'' answer if $X$ is one of the four dual vector fields: $\Phi(Z_1,Y_1)=\Phi(Z_2,Y_2)=-1$, $\Phi(Z_1,Z_2)=\Phi(Z_1,Y_2)=\Phi(Z_2,Y_1)=0$. The only missing is
$\Phi(Y_1,Y_2)$. By using the torus action it is enough to compute it at $\theta_1=\theta_2=0$. From the classical expression $\Phi=\sum dy_i\wedge dx_i$, we compute
\begin{equation}\label{eqn:AA}
\Phi(Y_1,Y_2)=\sum_{i=1}^{12} \langle dy_i,Y_1\rangle \langle dx_i,Y_2\rangle-\langle dy_i,Y_2\rangle \langle dx_i,Y_1\rangle.
\end{equation}
Here $x_i$, $i=1,..,12$ are all the coordinates of the three bodies, beginning with the first coordinate  $x_1$ of the first body, the second coordinate $x_2$ of the first body, etc., while $y_i$, $i=1,..,12$ are all the coordinates of the momenta, beginning with the first component $y_1$ of the momentum of the first body, etc. The four coordinates of each body and of each momentum are ordered according to the above decomposition $\R^2\oplus\R^2$. 
Thus $\theta_1=0$ gives $x_2=x_6=x_{10}=0$, and, as the velocities are orthogonal to the positions, $y_1=y_5=y_9=0$. Similarly, $\theta_2=0$ gives $x_4=x_8=x_{12}=0$ and $y_3=y_7=y_{11}=0$. When following the flow of $Y_1$ or of $Y_2$, $\theta_1$ and $\theta_2$ are both fixed to zero. We see that each of the terms in the sum (\ref{eqn:AA}) is zero, being the product of two factors, one of them being zero. 
Thus $\Phi(Y_1,Y_2)=0$ and $\Phi|_{\mathcal{U}}=d\mu_1\wedge d\theta_1+d\mu_2\wedge d\theta_2$.\end{proof}

The energy $H$ is defined on $\mathcal{M}_j$ and independent of $\theta_1$ and $\theta_2$. Hamilton's equations on a $\mathcal{U}$ as in Proposition~\ref{pro:Dar} are $\dot\theta_i=\partial H/\partial \mu_i$ and $\dot\mu_i=0$, which imply $\omega_i=\partial H/\partial \mu_i$. This leads to the following proposition.

\begin{proposition}
The slope  for the families of balanced configurations in the image of the scaled energy-momentum map satisfies 
\begin{equation} \label{eqn:slope}
     \frac{dk}{dh} = \frac{\mu_2 - \mu_1}{\omega_1 - \omega_2} (\mu_1 + \mu_2)^{-4}.
\end{equation}
\end{proposition}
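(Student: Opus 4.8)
The plan is to treat the energy as a function of the two conserved eigenvalues $\mu_1,\mu_2$ on the two--dimensional base $\mathcal{B}_j$ (shape together with size) and to push the identity $\omega_i=\partial H/\partial\mu_i$, just obtained from Proposition~\ref{pro:Dar}, through the definitions \eqref{eqn:hk}. Since $H$ is $\theta$--independent on $\mathcal{M}_j$, it descends to a function of $(\mu_1,\mu_2)$ with $dH=\omega_1\,d\mu_1+\omega_2\,d\mu_2$. Writing $\sigma=\mu_1+\mu_2$, a direct differentiation of $k=\mu_1\mu_2/\sigma^2$ collects into
\[
dk=\frac{\mu_2-\mu_1}{\sigma^{3}}\,\bigl(\mu_2\,d\mu_1-\mu_1\,d\mu_2\bigr),
\]
while $h=H\sigma^2$ gives $dh=\sigma^2\,dH+2H\sigma\,d\sigma=(\sigma^2\omega_1+2H\sigma)\,d\mu_1+(\sigma^2\omega_2+2H\sigma)\,d\mu_2$.

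The key observation is that $h$ and $k$ are scale invariant, so both one--forms annihilate the scaling direction on $\mathcal{B}_j$. Under $(\xi,\Omega)\to(s\xi,s^{-3/2}\Omega)$ one has $\Theta_i\sim s^2$ and $\omega_i\sim s^{-3/2}$, hence $\mu_i=\Theta_i\omega_i\sim s^{1/2}$, so the scaling vector field is the radial field $\mathcal{S}=\mu_1\partial_{\mu_1}+\mu_2\partial_{\mu_2}$. Consequently $dh$ and $dk$ are each a multiple of the unique (up to scale) one--form $\mu_2\,d\mu_1-\mu_1\,d\mu_2$ that kills $\mathcal{S}$, and the slope along the image curve is the ratio of the two proportionality constants. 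For $dk$ this constant is $f=(\mu_2-\mu_1)/\sigma^3$. For $dh$ the cleanest extraction is to subtract the two coefficients: for $\mu_2\,d\mu_1-\mu_1\,d\mu_2$ the $d\mu_1$--coefficient minus the $d\mu_2$--coefficient equals $\sigma$, while for $dh$ it equals $\sigma^2(\omega_1-\omega_2)$ — the $2H\sigma$ terms cancel — so the constant is $g=\sigma(\omega_1-\omega_2)$. Dividing yields $dk/dh=f/g=(\mu_2-\mu_1)\big/\bigl(\sigma^4(\omega_1-\omega_2)\bigr)$, which is exactly \eqref{eqn:slope}.

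I expect the only real subtlety to be the bookkeeping that makes the slope independent of $H$ itself. A priori $dh$ carries the term $2H\sigma\,d\sigma$, and what saves the computation is that $d\sigma=d\mu_1+d\mu_2$ points along the scaling direction and so contributes nothing to the component along $\mu_2\,d\mu_1-\mu_1\,d\mu_2$. If one prefers a self--contained verification to invoking scale invariance abstractly, one checks $dh(\mathcal{S})=0$ by hand from the virial identity $\omega_1\mu_1+\omega_2\mu_2=2T=-2H$ valid for relative equilibria (from $2T+V=0$ together with $H=T+V$): this is precisely the relation guaranteeing that $2H\sigma\,d\sigma$ cancels the radial part of $\sigma^2\,dH$. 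One minor caveat to flag is that $(\mu_1,\mu_2)$ cease to be good coordinates where $\mu_1=\mu_2$ (round inertia) or at the degenerate endpoints; on the open pieces of the families covered by Proposition~\ref{pro:Dar} this causes no difficulty, and the formula extends by continuity, with the factor $\mu_2-\mu_1$ correctly forcing a vanishing numerator at the round--inertia configuration.
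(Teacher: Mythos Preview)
Your argument is correct and is essentially the same as the paper's: both rest on $\omega_i=\partial H/\partial\mu_i$ from Proposition~\ref{pro:Dar} and on scale invariance (equivalently, the virial relation $\omega_1\mu_1+\omega_2\mu_2=-2H$) to eliminate the $2H\sigma\,d\sigma$ term. The paper simply contracts your one--forms with a tangent vector $d/ds$ to the balanced curve, obtaining $h'=(\omega_1-\omega_2)C\sigma$ and $k'=(\mu_2-\mu_1)C\sigma^{-3}$ with $C=\mu_1'\mu_2-\mu_2'\mu_1$, and then takes the ratio; your $\mu_2\,d\mu_1-\mu_1\,d\mu_2$ is precisely the form whose evaluation on $d/ds$ gives $C$.

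One small correction to your closing caveat: the condition $\mu_1=\mu_2$ is \emph{not} the round--inertia condition (that is $\Theta_1=\Theta_2$, at which the slope is finite and negative, not zero), and it is also not where $(\mu_1,\mu_2)$ fail to be local coordinates. The coordinate hypothesis of Proposition~\ref{pro:Dar} breaks down exactly where $d\mu_1\wedge d\mu_2=0$ on $\mathcal{B}_j$, which is the cusp locus $C=0$; the formula still holds there by continuity, as the paper notes.
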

\begin{proof}
Consider any family of balanced configurations with a coordinate $s$, which can be $s=b$ when using the $a$-segment 
as described in Lemma~\ref{lem:mono}.
The image in the scaled energy-momentum map of the family is then given by 
the parametric form $(h(s), k(s))$. To find $dk/dh$ we compute the derivatives with respect to the parameter $s$
(denoted by a dash) as
\begin{equation} \label{eqn:dhdk}
    h' = ( \omega_1 - \omega_2) C (\mu_1 + \mu_2), \quad
    k' = (\mu_2 - \mu_1) C (\mu_1 + \mu_2)^{-3}, \quad
    C = \mu_1' \mu_2 - \mu_2' \mu_1
\end{equation}
and forming the ratio proves the proposition.
\end{proof}

\begin{corollary}
1) A smooth maximum in the curve $h(k)$ implies $\omega_1 = \omega_2$ and hence an equilateral configuration.
2) The configuration with round tensor of inertia
is on the long family, at a point with negative slope.
3) A smooth maximum in the curve $k(h)$ implies $\mu_1 = \mu_2$ and hence $k = \tfrac14$.
\end{corollary}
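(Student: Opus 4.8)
The plan is to read off all three statements from the parametric derivatives in \eqref{eqn:dhdk} together with the slope formula \eqref{eqn:slope}, using only the two sign facts that $\mu_1+\mu_2>0$ and that the common factor $C=\mu_1'\mu_2-\mu_2'\mu_1$ appears in \emph{both} $h'$ and $k'$. The decisive structural remark is that $h'$ and $k'$ vanish simultaneously exactly when $C=0$; this is what produces the cusps of the short families, and it is precisely what the word ``smooth'' in the hypotheses is meant to exclude. Thus at a \emph{smooth} extremum one of the factors $\omega_1-\omega_2$ or $\mu_1-\mu_2$ must carry the vanishing, and the geometric meaning of that factor then gives the conclusion.

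For part 1, a smooth maximum of $h$ as a function of $k$ forces $h'=0$ while $k'\neq0$ at that point. Since $k'\neq0$ already gives $C\neq0$ (and $\mu_1+\mu_2>0$), the equation $h'=(\omega_1-\omega_2)C(\mu_1+\mu_2)=0$ leaves only $\omega_1=\omega_2$. With both rotation frequencies equal, the matrix $\Omega$ of the preliminary lemma satisfies $\Omega^2=-\omega^2 I$, so \eqref{eqn:releq} reduces to the equation of a planar central configuration; for three positive masses the only central configuration that is a genuine (non-degenerate) triangle is the equilateral one, which is the asserted conclusion. Part 3 is the mirror image: a smooth maximum of $k$ as a function of $h$ forces $k'=0$ with $h'\neq0$, hence $C\neq0$, and now $k'=(\mu_2-\mu_1)C(\mu_1+\mu_2)^{-3}=0$ leaves only $\mu_1=\mu_2$. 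Then $k=\mu_1\mu_2/(\mu_1+\mu_2)^2=\tfrac14$, which is simultaneously the largest value $k$ can take.

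For part 2 I would use the fact, recorded in the description of the triangle of shapes, that the configuration with round tensor of inertia is the maximum of the area $A$ on a level set $I=\mathrm{const}$ and therefore lies on the long family (the curve through the critical points of $A$ and $V$). At such a configuration the two moments of inertia coincide, $\Theta_1=\Theta_2=\Theta$, so the relations $\mu_i=\Theta_i\omega_i$ give $\mu_2-\mu_1=\Theta(\omega_2-\omega_1)$. For distinct masses this configuration is not equilateral (the identity $a=m_1(m_2+m_3)$, $b=m_2(m_1+m_3)$ forces $m_1=m_2$ whenever $a=b$), so $\omega_1\neq\omega_2$ and the slope formula \eqref{eqn:slope} applies directly:
\begin{equation*}
\frac{dk}{dh}=\frac{\Theta(\omega_2-\omega_1)}{\omega_1-\omega_2}\,(\mu_1+\mu_2)^{-4}=-\Theta\,(\mu_1+\mu_2)^{-4}<0,
\end{equation*}
so the slope is negative, as claimed.

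The only real obstacle is the bookkeeping around the word ``smooth''. Everything hinges on being able to assert $C\neq0$ at the extremum, so that the vanishing of $h'$ (resp.\ $k'$) is forced into the factor $\omega_1-\omega_2$ (resp.\ $\mu_2-\mu_1$) rather than into $C$; without this one cannot separate genuine smooth extrema from the cusps, where $C=0$ and both derivatives vanish together. The remaining content is the translation of the vanishing factors into configurations: $\omega_1=\omega_2\Rightarrow\Omega^2\propto I\Rightarrow$ central configuration $\Rightarrow$ equilateral for part 1, and $\Theta_1=\Theta_2$ at the round-inertia configuration for part 2, both of which are standard facts about the inertia tensor and the central configurations of three bodies.
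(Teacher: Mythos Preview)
Your arguments for parts 1 and 3 are essentially the paper's own: the paper simply says ``$dh/dk=0$ forces $\omega_1=\omega_2$ by the slope formula'' and its mirror, and you unpack this more carefully via the parametric factorisations \eqref{eqn:dhdk} and the role of $C$. Your computation of the negative slope in part 2 via $\mu_2-\mu_1=\Theta(\omega_2-\omega_1)$ is also exactly the paper's.

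The one real difference is how you place the round-inertia configuration on the long family. You invoke the sentence from the descriptive Section on balanced configurations (``the long family passes through the critical points of $A$ and $V$''), but that sentence is an observation from the figure, not something proved there; the paper's proof of this corollary is where it is actually established. The paper argues instead: $\Theta_1=\Theta_2$ makes the first two rows of the determinant \eqref{eqn:B} proportional, so $(a,b,c)$ is proportional to $(m_2^{-1}+m_3^{-1},\,m_3^{-1}+m_1^{-1},\,m_1^{-1}+m_2^{-1})$ and hence ordered the same way as $(m_1,m_2,m_3)$. Lemma~\ref{lem:B} shows that along an $a$-segment the long family is the one whose root lies in the region where $(a,b,c)$ has this ordering (before the first isosceles triangle), while the short families live in other orderings. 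That pins the configuration to the long family without relying on the unproved descriptive remark. Your version is therefore slightly circular at this step; supplying the ordering argument closes the gap.
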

\begin{proof}
1) A smooth maximum in $h$ implies $dh/dk = 0$, and hence by the slope formula $\omega_1 = \omega_2$.
Thus $\Omega^2$ is proportional to the identity, and the configuration is a central configuration.
2) A round tensor of inertia means that $\Theta_1 = \Theta_2 = \Theta$. This implies that $\Theta_1\Theta_2$ is critical when $\Theta_1+\Theta_2$ is fixed. But $A^2=M\Theta_1\Theta_2/4M_3$ and $I=\Theta_1+\Theta_2$. The configuration is at the centre of the ellipses of Fig.~\ref{fig:shapetriangle}, right. The first two rows of matrix~(\ref{eqn:B}) are proportional. Then, $(a,b,c)$ is proportional to $(m_2^{-1}+m_3^{-1},m_3^{-1}+m_1^{-1},m_1^{-1}+m_2^{-1})$. In particular, $(a,b,c)$ is ordered as $(m_1,m_2,m_3)$. The long family is defined as the family passing through the equilateral triangle, and there, according to Lemma~\ref{lem:B}, it passes from the region where $(a,b,c)$ is ordered as $(m_1,m_2,m_3)$ to the region where $(a,b,c)$ is in reversed order. On the short families $(a,b,c)$ are ordered differently, so, the configuration is on the long family. Now, $\mu_i=\omega_i\Theta_i$, $i=1,2$. Hence $(\mu_2-\mu_1)/(\omega_1 - \omega_2) = - \Theta$ so that the slope is negative.
3) A smooth maximum in $k$ implies $dk/dh = 0$, and hence by the slope formula $\mu_1 = \mu_2$. 
This implies that the scaled momentum is $k=\mu \mu / (\mu+ \mu)^2 = \tfrac14$.
\end{proof}

The image of a {\it parametrised path} $\R\to\R^m,\, t\mapsto f(t)$  displays a {\it cusp} at $f(t_0)$ if $f'(t_0)=0$ while $f''(t_0)\neq 0$. The vector $f''(t_0)$ indicates the direction of the cusp. In the case where $f'(t_0)=0$ and $f''(t_0)=0$, the image may have a different aspect, and may even appear as perfectly smooth, in particular, if the singularity at $t_0$ disappears after a change of parameter. Here we are interested in the images of the three smooth curves of balanced configurations by the map $(h,k)$. We will speak of a cusp when $(h',k')=(0,0)$, even if also $(h'',k'')=(0,0)$. We take as parameter a coordinate of the smooth curve of balanced configurations.

\begin{lemma}
A cusp in the $(h,k)$ diagram occurs when $C = \mu_1' \mu_2 - \mu_2' \mu_1 = 0$.
\end{lemma}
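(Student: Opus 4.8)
The plan is to read the result straight off the factored expressions for the tangent vector $(h',k')$ recorded in \eqref{eqn:dhdk}, since by the convention adopted in the paragraph preceding the lemma a cusp is by definition a point where $(h',k')=(0,0)$. First I would recall from \eqref{eqn:dhdk} that
\[
h' = (\omega_1 - \omega_2)\, C\, (\mu_1 + \mu_2), \qquad k' = (\mu_2 - \mu_1)\, C\, (\mu_1 + \mu_2)^{-3},
\]
so that $C = \mu_1'\mu_2 - \mu_2'\mu_1$ appears as a common factor of both components. Because the eigenvalues $\mu_1,\mu_2$ of $L$ are positive along a family of genuine four-dimensional balanced configurations, the scalar factors $\mu_1+\mu_2$ and $(\mu_1+\mu_2)^{-3}$ are finite and nonzero. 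Hence $C=0$ forces $h'=0$ and $k'=0$ simultaneously, i.e.\ $(h',k')=(0,0)$, which is exactly a cusp. This disposes of the sufficiency direction immediately.

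For the converse statement, that a cusp in turn forces $C=0$, I would argue by contradiction. Suppose $(h',k')=(0,0)$ while $C\neq 0$. Dividing out the nonzero common factor $C(\mu_1+\mu_2)^{\pm 1}$ in each component, the vanishing of $h'$ yields $\omega_1=\omega_2$ and the vanishing of $k'$ yields $\mu_1=\mu_2$. By part 1 of the corollary above, $\omega_1=\omega_2$ makes $\Omega^2$ proportional to the identity and so pins the configuration to the equilateral point $a=b=c$; on the other hand $\mu_1=\mu_2$ together with $\mu_i=\omega_i\Theta_i$ and $\omega_1=\omega_2$ gives $\Theta_1=\Theta_2$, the round tensor of inertia located (by part 2 of that corollary) at $(a,b,c)\propto(m_2^{-1}+m_3^{-1},\,m_3^{-1}+m_1^{-1},\,m_1^{-1}+m_2^{-1})$. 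For distinct masses these two proportional triples coincide only if all masses are equal, so they are different points of the triangle of shapes, a contradiction. Therefore $C=0$.

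I expect the only delicate point to be the converse, where one must confirm that the equilateral point and the round-inertia point genuinely differ. This is guaranteed by the explicit shape coordinates recorded in the corollary's proof, so no real obstacle remains: the substance of the lemma is the immediate observation that $C$ is a common factor of both components in \eqref{eqn:dhdk}, and the converse is a short incompatibility check valid whenever the masses are distinct.
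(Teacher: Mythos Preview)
Your sufficiency argument --- reading off from \eqref{eqn:dhdk} that $C=0$ forces $h'=k'=0$ --- is exactly the paper's proof, which consists of a single sentence to that effect. That is all the lemma asserts and all the paper proves: the statement ``a cusp occurs when $C=0$'' is meant as the implication $C=0\Rightarrow(h',k')=(0,0)$, and the subsequent applications (notably the proof of Proposition~\ref{pro:cusp}) only use that direction.

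Your additional converse argument goes beyond what the paper claims or needs. It is fine as far as it goes, but as you yourself note it breaks down when the equilateral shape and the round-inertia shape coincide, i.e.\ when all three masses are equal. In that case $\omega_1=\omega_2$ and $\mu_1=\mu_2$ can hold simultaneously at a single configuration, so no contradiction arises from your incompatibility check; one would need a separate computation to verify $C=0$ there (the paper in fact remarks after Proposition~\ref{pro:cusp} that $C=0$ and $\mu_1=\mu_2$ do occur together in the equal-mass case, but does not derive this from the lemma). So your converse is not a proof for arbitrary masses --- but since the paper never claims the converse, this is harmless overreach rather than a defect.
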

\begin{proof}
Equation \eqref{eqn:dhdk} says that when $C$ vanishes both derivatives vanish, $h' = k' = 0$.
\end{proof}
Note that $C = 0 $ can be interpreted as the condition  for the ratio $\mu_1 / \mu_2$ to have a critical point on the curve of balanced configurations. Then, $h$ and $k$ also have a critical point at this same point.
The dimensionless momentum $k=\mu_1\mu_2/(\mu_1+\mu_2)^2$ is a function of $\mu_1 / \mu_2$ which has the advantage of being invariant under exchange of $\mu_1$ and $\mu_2$.

Note also that the local coordinate hypothesis of Proposition~\ref{pro:Dar} implies that $d\mu_1\wedge d\mu_2\neq 0$ on the open set $\mathcal{U}$. But $d\mu_1\wedge d\mu_2=0$ on $\mathcal{M}_i$ if and only if $d\mu_1\wedge d\mu_2=0$ on $\mathcal{B}_i$. This is a codimension 1 and homogeneous condition on the semi-cone $\mathcal{B}_i$. Let $X$ be the non-zero ``velocity vector'' defined by the parametrisation by $s$ of the curve of balanced configuration. The condition $X\rfloor(d\mu_1\wedge d\mu_2)=0$ obtained by contraction of $X$ is equivalent to $d\mu_1\wedge d\mu_2=0$, and is also $\mu_1'd\mu_2-\mu_2'd\mu_1=0$. By contracting now a radial vector  and using the homogeneity of $(\mu_1,\mu_2)$, we see that this condition is $C=0$. Consequently \emph{a cusp happens when the local coordinate hypothesis of Proposition~\ref{pro:Dar} fails to be satisfied.} Equation (\ref{eqn:dhdk}) is however still valid at $C=0$ since it may be obtained by passing to the limit as $C$ tends to zero.

\begin{proposition}\label{pro:cusp}
Every balanced family has a cusp or it touches the maximum $k = 1/4$, or both simultaneously.
\end{proposition}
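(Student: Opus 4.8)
The plan is to read off the result at the interior maximum of the dimensionless momentum $k$ along each family, using the endpoint behaviour of Lemma~\ref{lem:C} together with the factored expression \eqref{eqn:dhdk} for $k'$. By Corollary~\ref{cor:T} each balanced family (of true triangles) is a smooth arc with exactly two ends, one an Euler configuration and one a collision point, and by Lemma~\ref{lem:C} the value of $k$ tends to $0$ at both of them. I would parametrise the family smoothly, for instance by $s=b$ as in the slope proposition and Lemma~\ref{lem:mono}, so that $k(s)$ is a smooth function extending continuously to the two ends with boundary value $0$. In the interior the configuration is a genuine, non-collinear triangle, so the two moments of inertia $\Theta_1,\Theta_2$ and the two balancing frequencies $\omega_1,\omega_2$ are all strictly positive; hence $\mu_1=\Theta_1\omega_1$ and $\mu_2=\Theta_2\omega_2$ are positive and $k=\mu_1\mu_2/(\mu_1+\mu_2)^2>0$. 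Thus $k$ is continuous on a closed interval, vanishing at both ends and strictly positive inside, and therefore attains a maximum at some interior parameter $s_0$.

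The next step is to exploit that $k'(s_0)=0$ at such an interior maximum, and to invoke \eqref{eqn:dhdk}, namely $k'=(\mu_2-\mu_1)\,C\,(\mu_1+\mu_2)^{-3}$ with $C=\mu_1'\mu_2-\mu_2'\mu_1$; this identity remains valid at $C=0$, as noted after the cusp lemma. Since $(\mu_1+\mu_2)^{-3}\neq 0$, the vanishing of $k'$ forces $(\mu_2-\mu_1)\,C=0$, so at $s_0$ either $\mu_1=\mu_2$ or $C=0$ (or both). In the first case $k(s_0)=\mu_1\mu_2/(\mu_1+\mu_2)^2=\tfrac14$, so the family touches the maximal value $k=1/4$. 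In the second case the cusp lemma applies: $C=0$ makes both $h'$ and $k'$ vanish by \eqref{eqn:dhdk}, so the image in the $(h,k)$-plane has a cusp at $s_0$. The stated alternative ``a cusp or it touches $k=1/4$, or both simultaneously'' is exactly this trichotomy read off at $s_0$, the last case occurring precisely when $\mu_1=\mu_2$ and $C=0$ hold together.

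The one point requiring care, and the main obstacle, is justifying that $k$ is strictly positive throughout the interior of each family, equivalently that neither frequency $\omega_i$ degenerates to zero along a family of true triangles: this is what guarantees the maximum is genuinely interior and positive, rather than the spurious boundary value $0$. I would argue this from the relative-equilibrium construction preceding Lemma~\ref{lem:C}: an interior point is a non-degenerate triangle strictly inside the ellipse of flat triangles, so $\Theta_1,\Theta_2>0$, and the balancing condition \eqref{eqn:releq}, which requires the centrifugal term to support the configuration against the strictly attractive gravitational forces in each plane $\R^2$, forces $\omega_1,\omega_2\neq 0$; the counterclockwise choice then makes them positive, so $\mu_1,\mu_2>0$ and $k>0$. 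A secondary, lighter point is that the chosen parametrisation is smooth with non-vanishing derivative, so that the condition $k'(s_0)=0$ is coordinate-independent and the critical-point analysis is legitimate; this is already supplied by the implicit-function-theorem argument that produced the smooth curves in Corollary~\ref{cor:T}.
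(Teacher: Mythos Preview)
Your argument is correct and follows essentially the same route as the paper's own proof: use Lemma~\ref{lem:C} to get $k\to 0$ at both ends, locate an interior maximum of $k$, and read off the dichotomy $\mu_1=\mu_2$ versus $C=0$ from the factorisation \eqref{eqn:dhdk}. The paper's proof is terser and simply asserts that $k$ is positive in the interior, whereas you supply the extra justification that both $\omega_i$ are nonzero for a genuine triangle; that step is sound and worth keeping.
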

\begin{proof}
The dimensionless momentum $k$ goes to zero at both endpoints of the family according to Lemma~\ref{lem:C}.
Otherwise $k$ is positive and bounded above by $1/4$, so it must have a maximum somewhere. 
At the maximum $k' = 0$, and so either $\mu_1 = \mu_2$ and hence $k = 1/4$, or $C = 0$, or both. 
When $C = 0$ then also $h' = 0$ and there is a cusp. When $\mu_1 = \mu_2$ but $C \not = 0$ 
there is a smooth tangency at $k = 1/4$. When both factors vanish there is a cusp at $k = 1/4$.
\end{proof}

The special case where both, $\mu_1 = \mu_2$ and $C=0$ occur simultaneously, 
so that there is a cusp at $k=1/4$ does take place when all masses are equal, see Fig.~\ref{fig:equalmasses}.

If a cusp satisfies the non-degeneracy condition $(h'',k'')\neq (0,0)$, the slope $k'/h'$ varies smoothly as a function of the parameter $s$ when passing the cusp, as shown by the Taylor expansion of the numerator and the denominator. In particular, it does not change sign, except maybe at the already considered special points where the slope is zero or infinity. If we start from the Lagrange point, where the slope is infinity, and increase $k$, the slope is negative. Even if we meet a non-degenerate cusp, the slope will remain negative.
But we are on the long family, and should follow it, in the cases of Figs.~\ref{fig:gen1} or \ref{fig:2equ12}, up to a collision configuration.
Indeed, from \cite{DS19}, we know that near the collision configuration where $h \to -\infty$ and $k\to 0$ the slope $dk/dh$ is positive. The slope should change sign and consequently, under non-degeneracy assumption, {\it the long family should have a configuration with $k = 1/4$}, which is verified on Figs.~\ref{fig:gen1} and \ref{fig:2equ12}, but also on Fig.~\ref{fig:2equ2}, for which this fact will be established in the next section.
Apparently, we always have existence, but also uniqueness of the balanced configuration with $k=1/4$.

\begin{conjecture}
For any choice of masses, a unique balanced configuration allows a motion of relative equilibrium with a given angular momentum $L$ with equal eigenvalues $\mu_1=\mu_2$.
\end{conjecture}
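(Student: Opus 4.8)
The plan is to split the statement into existence and uniqueness, obtaining existence almost for free from the minimisation results already established, and concentrating the real work on uniqueness. For existence, fix a rank-4 angular momentum with equal eigenvalues, $L_0=\mu(e_1\wedge e_2+e_3\wedge e_4)$ with $\mu>0$. By Proposition~\ref{pro:bounded} the energy $H$ is bounded below on the integral manifold $\{L=L_0\}$, and by the discussion preceding Theorem~\ref{thm:sta} its infimum is attained at an isolated balanced configuration: estimate \eqref{eqn:4} excludes collisions, while fixing $L$ breaks the scaling so the size cannot run off. The minimiser is a relative equilibrium whose angular momentum equals $L_0$, hence $\mu_1=\mu_2=\mu$. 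This gives, for every choice of masses, at least one balanced configuration with $k=\tfrac14$. (Alternatively one may run the slope argument following Proposition~\ref{pro:cusp}: on the long family $dk/dh<0$ just past the Lagrange point but is positive near the collision limit $h\to-\infty$ by \cite{DS19}, so $k$ is forced to attain $\tfrac14$.)

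\textbf{Reducing $\mu_1=\mu_2$ to an algebraic condition.} In the inertia axes of a balanced configuration, with planar coordinates $(x_i,y_i)$, equation \eqref{eqn:releq} splits into $-\omega_1^2x_i=\sum_{j\neq i}m_j(x_j-x_i)/d_{ij}^3$ and its analogue in $y$. Contracting with $m_ix_i$ (respectively $m_iy_i$) and summing gives $\omega_1^2\Theta_1=P$ and $\omega_2^2\Theta_2=Q$, where
\[
P=\sum_{i<j}\frac{m_im_j}{d_{ij}^3}(x_i-x_j)^2,\qquad Q=\sum_{i<j}\frac{m_im_j}{d_{ij}^3}(y_i-y_j)^2,\qquad P+Q=-V.
\]
Since $\mu_i=\omega_i\Theta_i$ this yields $\mu_1^2=P\Theta_1$ and $\mu_2^2=Q\Theta_2$, so that
\[
\mu_1=\mu_2\quad\Longleftrightarrow\quad P\,\Theta_1=Q\,\Theta_2 .
\]
The conjecture is thus equivalent to the assertion that, up to scaling, the single equation $P\Theta_1=Q\Theta_2$ has exactly one solution on the variety $B=0$ of \eqref{eqn:B}.

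\textbf{Counting zeros along the three families.} I would study the discriminant $\Delta=(\mu_1-\mu_2)^2=\sigma^2-4\pi$, in the symmetric functions $\sigma=\mu_1+\mu_2$ and $\pi=\mu_1\mu_2$, along the three smooth curves of Corollary~\ref{cor:T}. One has $\Delta\geq0$, with $\Delta=0\iff k=\tfrac14$ because $k=\tfrac14-\Delta/(4\sigma^2)$. By Lemma~\ref{lem:C}, $k\to0$ and hence $\Delta>0$ at all six endpoints, so every zero of $\Delta$ is interior and non-degenerate. The slope relation \eqref{eqn:dhdk} gives an interlacing tool: between any two zeros of $\Delta$ on one family, $k$ attains its maximum $\tfrac14$ at each, hence has an interior critical point with $k<\tfrac14$ in between, which by the slope formula must be a cusp ($C=0$); so the number of zeros of $\Delta$ is at most the number of cusps plus one on each family. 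The plan is then: (i) on each short family show $\Delta>0$ throughout, its only critical point of $k$ being the cusp of Proposition~\ref{pro:cusp} at $k<\tfrac14$; and (ii) on the long family show $\Delta$ has a single non-degenerate zero (the smooth tangency to $k=\tfrac14$, consistent with the long family carrying the round-inertia configuration found earlier). Together these give exactly one solution, located on the long family.

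\textbf{Main obstacle.} The hard part is the global, mass-uniform control in (i)--(ii). The slope formula is purely local — it only matches critical points of $\mu_1/\mu_2$ with cusps — so it cannot by itself bound the number of oscillations of $\Delta$, and pure interlacing still permits, for example, two zeros straddling the cusp of a short family. To close the gap I would try to prove a definite-sign statement for $P\Theta_1-Q\Theta_2$ on the short families, exploiting the fixed ordering of $(a,b,c)$ there, and a monotonicity statement for $P\Theta_1-Q\Theta_2$ along the long family in the spirit of Lemma~\ref{lem:mono}, using the $a$- and $b$-segment parametrisation and implicit differentiation of $B=0$ from \eqref{eqn:B}. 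Establishing such a sign/monotonicity result for \emph{all} positive masses, rather than verifying it numerically as in Fig.~\ref{fig:gen1}, is precisely the step that I expect to resist a short proof and that keeps the statement a conjecture.
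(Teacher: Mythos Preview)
The statement you are addressing is labelled a \emph{Conjecture} in the paper; the authors offer no proof. What the paper does provide, in the paragraph immediately preceding the conjecture, is a heuristic existence argument on the long family via the slope formula \eqref{eqn:slope}, and that argument explicitly relies on an unverified non-degeneracy assumption at cusps; uniqueness is asserted only on the basis of the numerical figures. There is therefore no proof in the paper to compare your proposal against.

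Your proposal is honest about this: it is a strategy, not a proof, and your ``Main obstacle'' paragraph correctly identifies where any argument must do real work. A few remarks. Your existence route via the global minimum of $H$ on $\{L=L_0\}$ is actually cleaner than the paper's slope heuristic, since it sidesteps the cusp non-degeneracy issue entirely; note however that Proposition~\ref{pro:bounded} gives only a lower bound, not attainment, so you must also exclude escape to spatial infinity on the level set (easy: if all mutual distances diverge then $H\to T\ge 0$ while the infimum is negative, and mixed regimes are handled with \eqref{eqn:4}). Your algebraic reduction $\mu_1=\mu_2\iff P\,\Theta_1=Q\,\Theta_2$ is correct and gives a concrete scalar function to analyse along the families. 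But your interlacing step only bounds the number of $k=\tfrac14$ points on a family by one more than the number of cusps on that family, and controlling the number of cusps is precisely the content of the paper's second Conjecture; so this route does not decouple the two open problems. The global sign statement for $P\Theta_1-Q\Theta_2$ on the short families and the monotonicity statement on the long family that you propose are exactly the missing ingredients, and establishing them for all positive mass triples is, as you rightly conclude, what keeps the statement a conjecture.
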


This conjecture would imply that the short families do not reach $k=1/4$, and, by Proposition~\ref{pro:cusp}, that each of them has at least a cusp. We could also deduce that the slope is always positive on the short families, even if there are several cusps. But apparently, there is exactly one cusp on each short family, and there are no cusps on the long family. This would be a corollary of the following conjecture.

\begin{conjecture}
On each balanced family, the functions $h$ and $k$ have exactly one critical point each.
\end{conjecture}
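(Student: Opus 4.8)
The plan is to reduce the statement to a zero-counting problem for three functions along each family and then to establish the counts. Parametrise each balanced family by the coordinate $s=b$ used in Lemma~\ref{lem:mono} (with $c=1$ and $a=a(b)$ monotone). From \eqref{eqn:dhdk} one has $h'=(\omega_1-\omega_2)C(\mu_1+\mu_2)$ and $k'=(\mu_2-\mu_1)C(\mu_1+\mu_2)^{-3}$ with $C=\mu_1'\mu_2-\mu_2'\mu_1$ and $\mu_1+\mu_2>0$. Hence a critical point of $h$ occurs exactly where $\omega_1=\omega_2$ or $C=0$, and a critical point of $k$ exactly where $\mu_1=\mu_2$ or $C=0$. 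Since a cusp $C=0$ is simultaneously a critical point of $h$ and of $k$, an elementary bookkeeping of these coincidences shows that ``one critical point each'' is equivalent to the following three zero-counts: on the long family, $C$ has no zero while each of $\omega_1-\omega_2$ and $\mu_1-\mu_2$ has exactly one zero; on each short family, $C$ has exactly one zero while neither $\omega_1-\omega_2$ nor $\mu_1-\mu_2$ vanishes.

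The zeros of $\omega_1-\omega_2$ are the quickest to settle, and they behave uniformly in the masses. By the discussion preceding Theorem~\ref{thm:equilateral}, $\omega_1=\omega_2$ means $\Omega^2$ is a negative multiple of the identity, i.e.\ the configuration is a planar central configuration. The only planar central configurations of the three-body problem are the three Euler collinear ones and the Lagrange equilateral one. The Euler configurations have $A=0$ and are precisely the endpoints of the families by Corollary~\ref{cor:T}, so they are excluded from the open family; the Lagrange point $a=b=c$ is interior and, being the defining point of the long family, lies on the long family only. Thus $\omega_1-\omega_2$ vanishes exactly once on the long family and never on the short families, as required.

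For the remaining two functions I would make the dependence on the shape explicit. Projecting \eqref{eqn:releq} onto the two principal axes of inertia (coordinates $x_i,y_i$ with $\sum m_i x_i y_i=0$, $\Theta_1=\sum m_i x_i^2$, $\Theta_2=\sum m_i y_i^2$) and pairing the $x$-equations with $m_i x_i$, resp.\ the $y$-equations with $m_i y_i$, gives
\[
\omega_1^2\Theta_1=\Phi_x,\qquad \omega_2^2\Theta_2=\Phi_y,\qquad \Phi_x=\sum_{i<j}\frac{m_im_j(x_i-x_j)^2}{d_{ij}^3},\quad \Phi_y=\sum_{i<j}\frac{m_im_j(y_i-y_j)^2}{d_{ij}^3}.
\]
Hence $\mu_1^2=\Phi_x\Theta_1$, $\mu_2^2=\Phi_y\Theta_2$, and the ratio $\rho=\mu_1/\mu_2$ satisfies $\rho^2=\Phi_x\Theta_1/(\Phi_y\Theta_2)$. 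With this, a cusp is exactly a critical point of $\rho$ (since $C=0\iff\rho'=0$), while $\mu_1=\mu_2$ is exactly $\rho=1$. The required counts become a statement purely about $\rho$ as a function of $s$: on the long family $\rho$ should be strictly monotone and attain the value $1$ exactly once, while on each short family $\rho$ should be strictly unimodal with a single extremum whose value stays strictly on one side of $1$. The boundary behaviour is favourable: by Lemma~\ref{lem:C}, $k\to 0$, hence $\rho\to 0$ or $\rho\to\infty$, at each endpoint; a short family that never crosses $\rho=1$ is confined to $(0,1)$ or to $(1,\infty)$, so both endpoints force $\rho$ back to the same limit and at least one interior extremum must occur. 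The whole problem is to show it occurs exactly once and that $\rho$ never reaches $1$ there.

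The decisive and most technical step is precisely this monotonicity/unimodality of $\rho$, equivalently the sign of $(\ln\rho^2)'=(\ln\Phi_x-\ln\Phi_y)'+(\ln\Theta_1-\ln\Theta_2)'$ along the balanced family. The difficulty is that the shape is constrained implicitly by $B=0$ and that $\Phi_x,\Phi_y$ carry the nonalgebraic weights $d_{ij}^{-3}$, so no elementary monotonicity is apparent. I would proceed in the spirit of Lemmas~\ref{lem:A}--\ref{lem:mono}, differentiating along the family via $da/db=-(dB/db)/(dB/da)$ and trying to factor the resulting expression so as to bound the number of sign changes of $\rho'$, handling the symmetric equal-mass and two-equal-mass cases---where the isosceles families are straight segments---by the explicit reductions available there and recovering the generic case by continuity. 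Note that the count $\rho=1$ is exactly the existence-and-uniqueness statement of the preceding conjecture on the configuration with $\mu_1=\mu_2$ (i.e.\ $k=\tfrac14$), so that conjecture would be settled simultaneously. I expect that establishing the constant sign of $\rho'$ on the long family (no cusp) will be the main obstacle, and that it may ultimately require a clever algebraic identity expressing $C$ through $B$ and its derivatives, or a rigorous computer-assisted estimate, rather than a soft topological argument.
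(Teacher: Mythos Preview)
The paper does not prove this statement: it is explicitly labelled a \emph{Conjecture} and left open. The surrounding discussion only records the numerical evidence (``apparently, there is exactly one cusp on each short family, and there are no cusps on the long family'') and observes that the preceding Conjecture on the uniqueness of the $k=\tfrac14$ configuration would be settled simultaneously. There is no proof in the paper to compare against.

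Your reductions are sound and in fact go a bit further than the paper. The bookkeeping that translates ``one critical point each'' into zero-counts for $C$, $\omega_1-\omega_2$, and $\mu_1-\mu_2$ is correct, and your disposal of the $\omega_1-\omega_2$ zeros via the classification of planar central configurations is clean: the Lagrange point lies on the long family only, and the Euler points are the endpoints of Corollary~\ref{cor:T}, not interior critical points. The virial-type identities $\omega_i^2\Theta_i=\Phi_{x},\Phi_{y}$ and the resulting expression $\rho^2=\Phi_x\Theta_1/(\Phi_y\Theta_2)$ are correct and give a concrete target that the paper does not write down.

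However, the step you yourself flag as ``decisive and most technical'' is genuinely missing, and nothing in your outline closes it. Differentiating along $B=0$ via $da/db=-(dB/db)/(dB/da)$ produces an expression for $(\ln\rho^2)'$ mixing the weights $d_{ij}^{-3}$ and $d_{ij}^{-5}$ with the inertia data; there is no evident factorisation controlling its sign changes. Your fallback of treating the equal-mass and two-equal-mass cases explicitly and recovering the generic case ``by continuity'' cannot work as stated: the paper itself notes (see Fig.~\ref{fig:gen23} and the discussion of the case $\mu>1$) that the balanced families do \emph{not} depend continuously on the masses when perturbing away from $m_1>m_2=m_3$, so a deformation argument from the isosceles families would have to survive a reconnection of branches. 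As it stands, your proposal is an accurate map of what must be shown rather than a proof, and it agrees with the paper in leaving the statement open.
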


This conjecture implies that for generic angular momentum there are either 2, 4 or 6 
non-Lagrange relative equilibria. The smallest number 2 occurs for large $k$ close to $1/4$, while the largest number 6 occurs for small $k$ near 0.

\section{Special masses}

We are going to present analytical results for three qualitatively different cases:
three equal masses and two equal masses with the third mass either smaller 
or larger than the equal masses.

\subsection{Three Equal masses}

\begin{figure} 
\includegraphics[height=6cm]{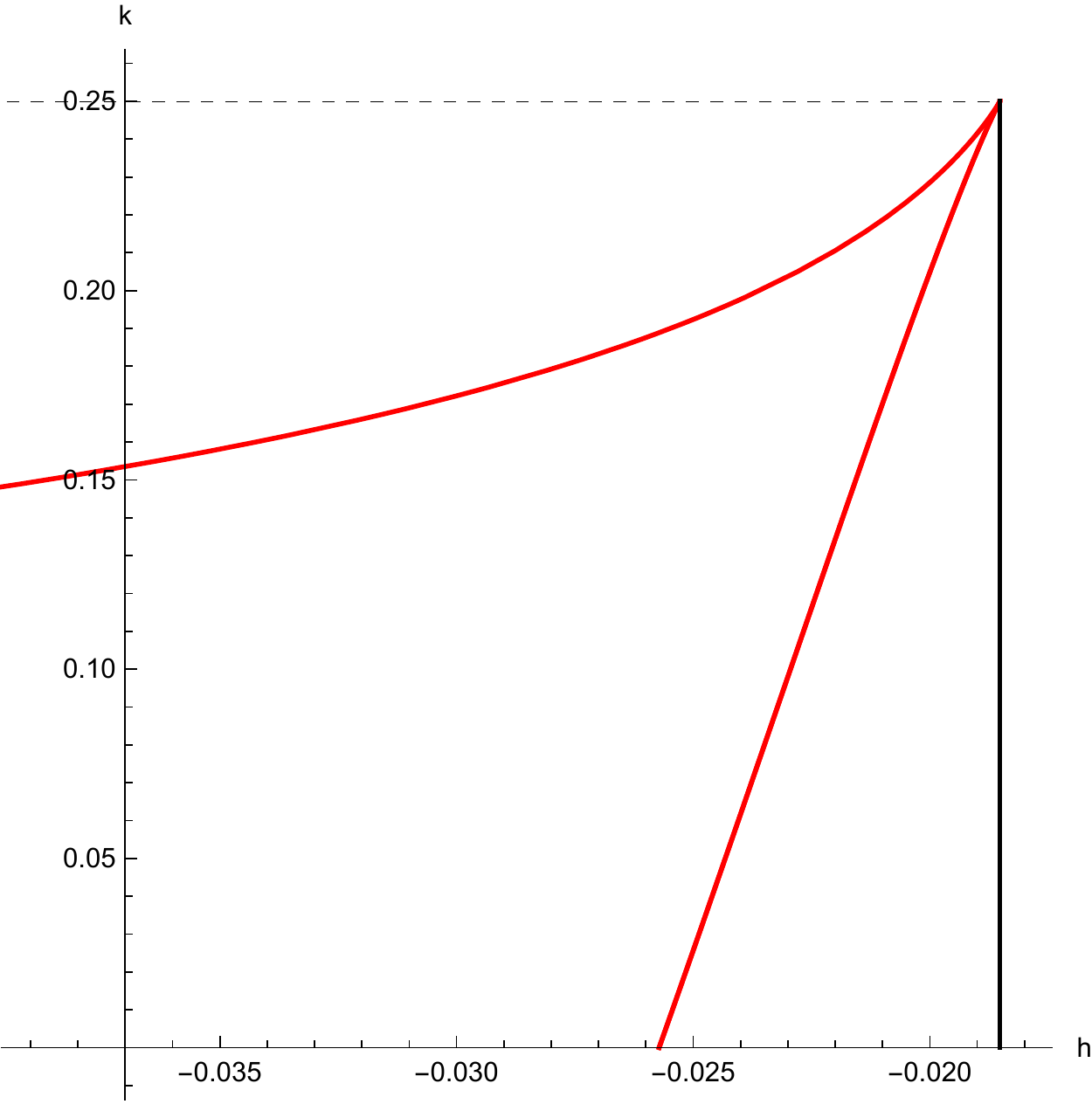}
\includegraphics[height=6cm]{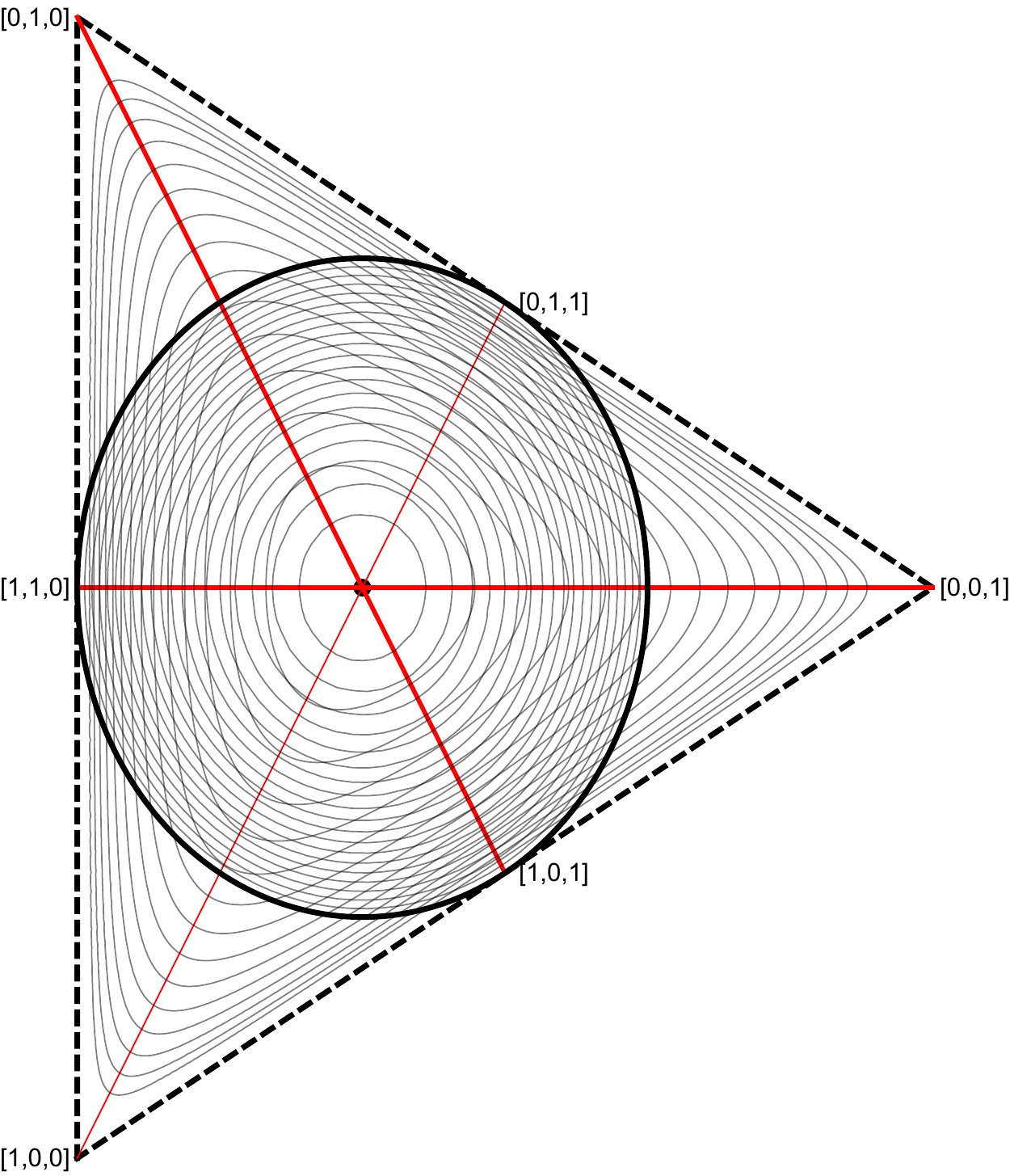}
\caption{Equal mass case. The Lagrange equilateral family is the vertical line, in this case extending 
all the way to $k = 1/4$. The three isosceles families coincide and emerge 
at the collinear Euler solution at $k = 0$. Isosceles triangles with $\rho < 1$ 
are non-linearly stable because they are minima in the energy for fixed $k$.} \label{fig:equalmasses}
\end{figure}

When the masses are all equal any isosceles triangle is a balanced configuration, 
and no other triangle is balanced, see Fig.~\ref{fig:equalmasses}. 
The three solution branches shown on the right are equivalent up to permutation of masses.
In the left figure the top branch of minimal energy at fixed 
angular momentum invariant $k$ corresponds to a definite Hessian. 
Explicit formulas for this case can be obtained from the next theorem 
for the special value of the mass ratio parameter $\mu = 1$.

\goodbreak

\subsection{Two equal masses}

\begin{figure} 
\includegraphics[height=6cm]{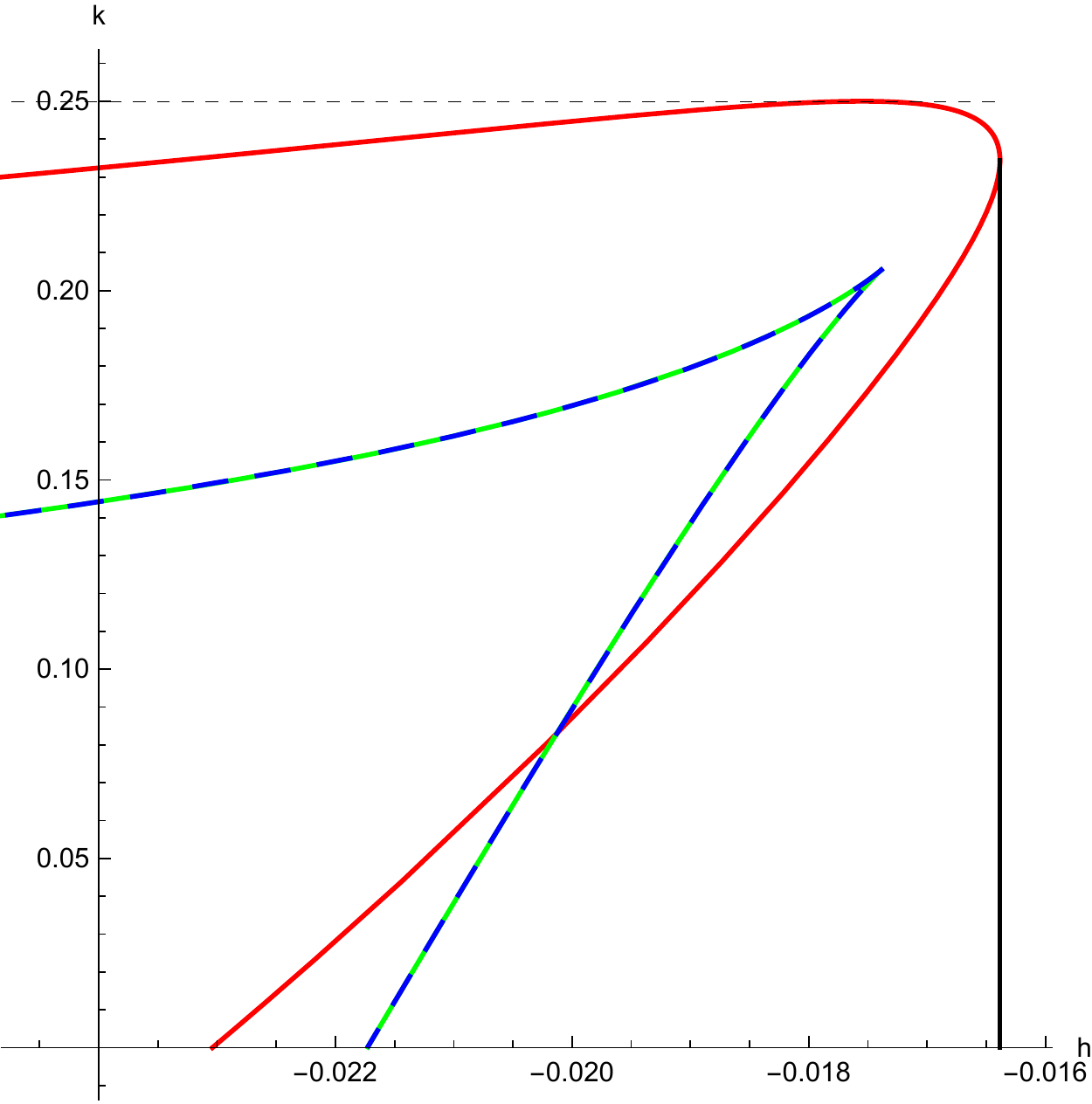}
\includegraphics[height=6cm]{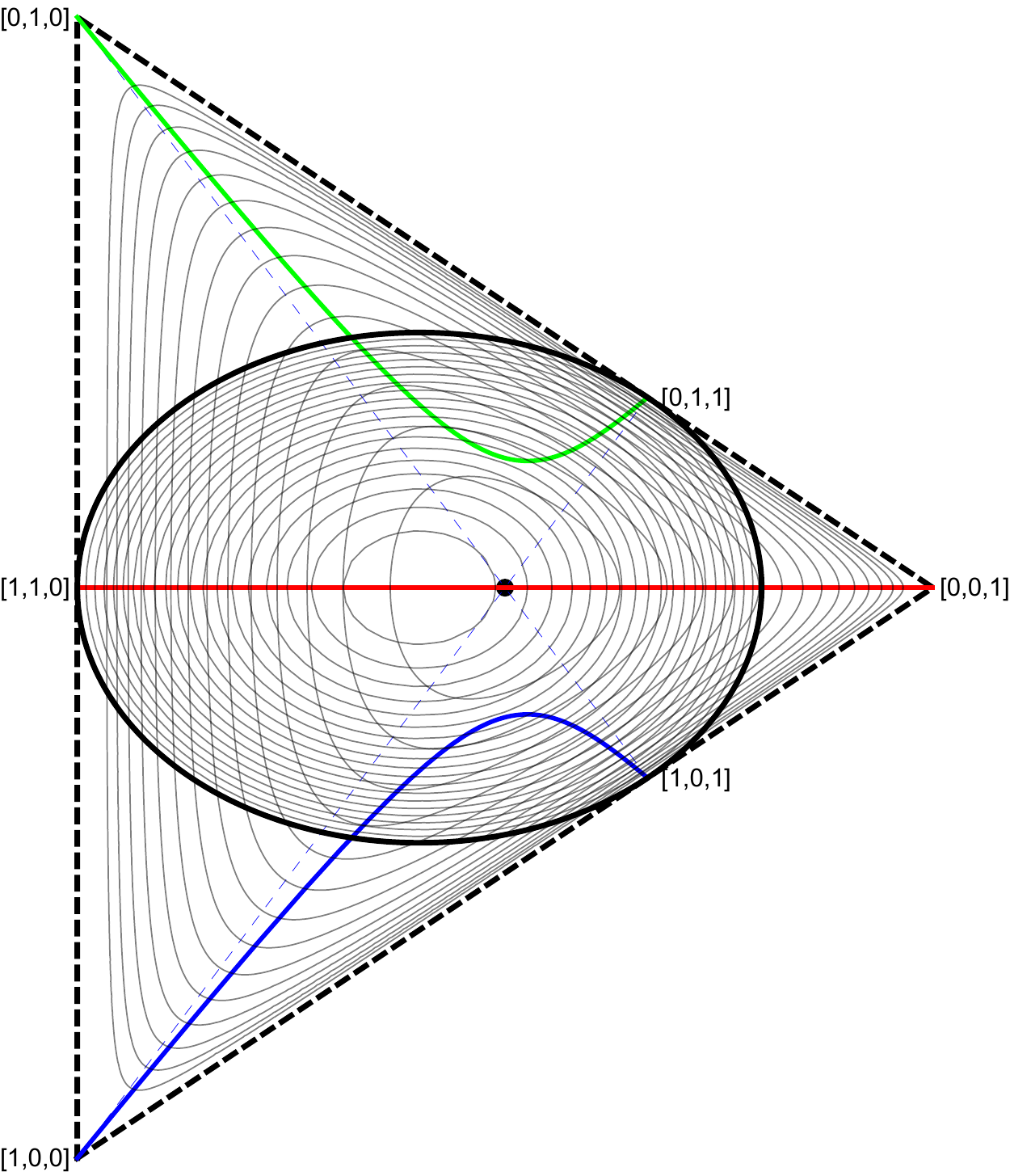}
\caption{Two equal masses, third mass smaller ($\mu = 1/2$).
The Lagrange equilateral family is the vertical line, {\em not} extending 
all the way to $k = 1/4$. At the endpoint it meets the isosceles long family, 
which later touches $k=1/4$. Both short families of asymmetric triangles
emerge from a collinear Euler configuration at $k=0$ and have a cusp
beyond which $h$ approaches $-\infty$. The isosceles configurations to the 
left of the tangency with $k = 1/4$ are the absolute minimum of the energy 
and hence are non-linearly stable.
} \label{fig:2equ12}
\end{figure}

\begin{figure} 
\includegraphics[height=5cm]{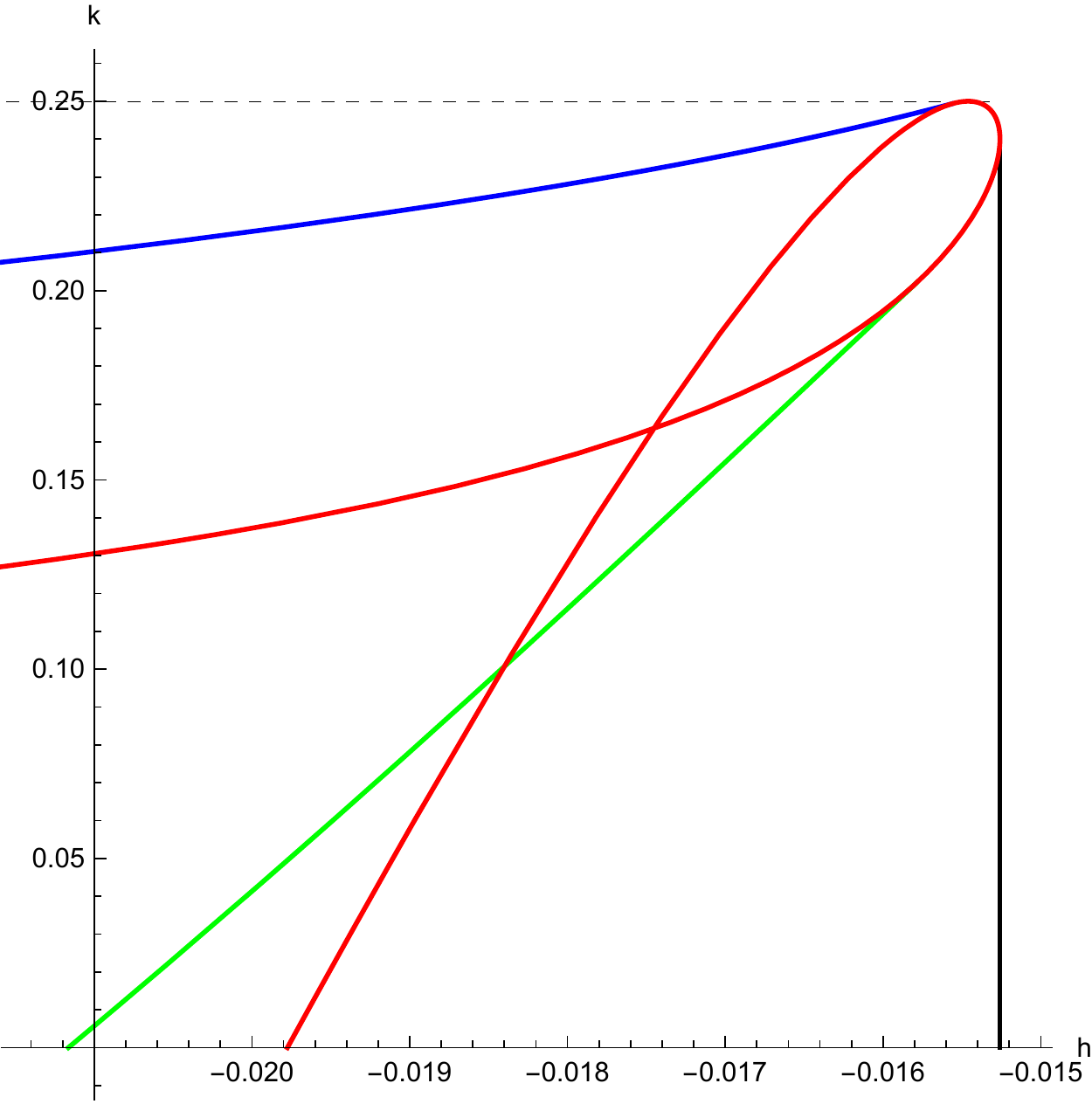}
\includegraphics[height=5cm]{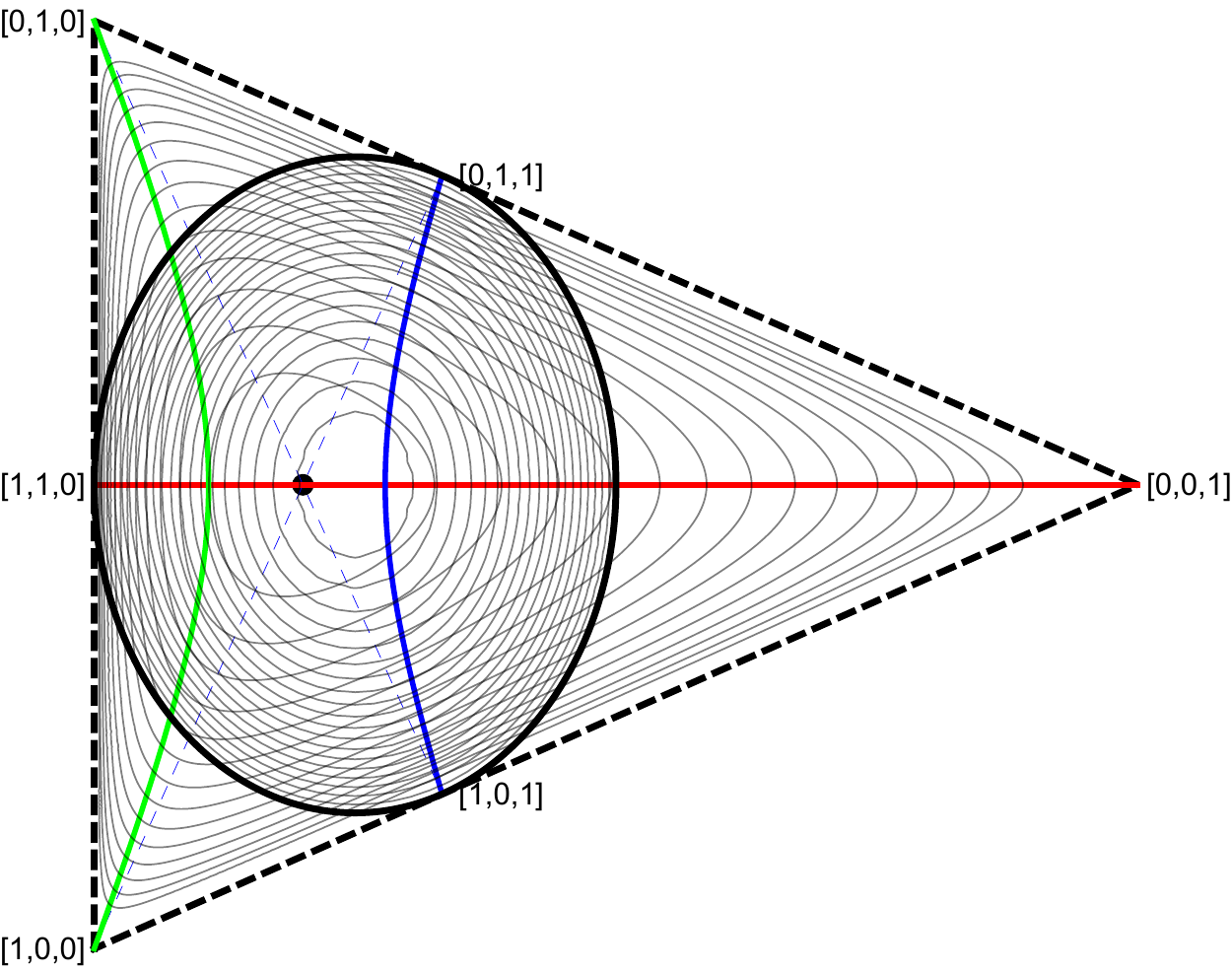}
\caption{Two equal masses, third mass bigger ($\mu = 2$).
The Lagrange equilateral family is the vertical line, not extending 
all the way to $k = 1/4$. At the endpoint it meets the long isosceles family (red), 
which later touches $k=1/4$. One short family (green) of asymmetric triangles
emerges from a collinear Euler configuration at $k=0$, has a cusp 
tangent to the long family and retraces itself back down.
The other short family  (blue) of asymmetric triangles starts 
and finishes at the collision where $h \to -\infty$, 
and has a cusp tangent to the long family.
These asymmetric triangles of absolute minimal energy are non-linearly stable.
There is a tiny part of the long family of symmetric isosceles triangles which have 
absolute minimal energy and hence are non-linearly stable.
} \label{fig:2equ2}
\end{figure}

When two (but not all) masses are equal two cases need to be distinguished,
depending on whether the single mass is smaller or larger than the multiple masses. 
Denote the ratio of the single mass to that of the repeated mass by $\mu$, then 
we need to distinguish $0 < \mu < 1$ and $1 < \mu$. The shape of the isosceles 
triangle is described by $\rho$, which is the ratio of the length of the single side to 
length of the repeated side. For obtuse triangles $\sqrt{2} < \rho \le 2$ where $\rho = 2$
is the limiting collinear case and $\rho = \sqrt{2}$ is an isosceles right triangle. 
Triangles with $0 < \rho < \sqrt{2}$ are acute where $\rho = 1$ is equilateral and 
the limiting case $\rho \to 0$ corresponds to collision.
The isosceles configuration can be analytically analysed. Notice, however, that 
in the case of two equal masses there are also two additional families of non-isosceles 
triangles.

\begin{theorem}
For masses $(m_1, m_2, m_3) = (1,1,\mu) m$ there is a family of isosceles triangles with $(d_{23}, d_{13} , d_{12}) = (1,1,\rho) s$
that is a balanced configuration for some length scaling factor $s$ and mass scaling factor $m$.
The image of this family in the energy-momentum map is given by 
\[
     h(\rho)   =
    -\frac18 m^5 (1 + 2 \rho \mu)( 2 + \rho^3 \mu) (1 + \chi(\rho))^2,\qquad k(\rho)=\frac{1}{2  + \chi(\rho) +  \chi(\rho)^{-1}}, 
\]
where 
\[     
      \chi(\rho) = 
      \frac{ ( 4 - \rho^2) \mu}{ \sqrt{\rho ( 2 + \mu) ( 2 + \rho^3 \mu)} } 
\]
parametrised by $0 < \rho < 2$.
\end{theorem}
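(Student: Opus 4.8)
The plan is to write the isosceles family down explicitly, read off the two moments of inertia and the two rotation frequencies, and then assemble $h$ and $k$ from the classical identity $\mu_i=\Theta_i\omega_i$ together with $\ell^2+2|\pf|=(\mu_1+\mu_2)^2$.

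First I would place the triangle in a plane with centre of mass at the origin, taking the axis of symmetry (on which the body of mass $\mu m$ lies) as one axis of inertia and the perpendicular in-plane direction as the other. With masses $(m,m,\mu m)$ and sides $(s,s,\rho s)$ the centre-of-mass relation fixes all three position vectors in terms of $\rho,\mu,s$ up to reflection; the mixed inertia coefficient then vanishes, so these really are the inertia axes, with moments $\Theta_1=\tfrac12 m\rho^2 s^2$ and $\Theta_2=\tfrac{\mu(4-\rho^2)}{2(\mu+2)}m s^2$ (a useful check is $A^2=M\Theta_1\Theta_2/4M_3$ as in Lemma~\ref{lem:C}). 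That every such configuration is balanced is immediate: with $m_1=m_2$ and $a=b$ the first two columns of the determinant \eqref{eqn:B} coincide, so $B=0$ for all $\rho$; equivalently this is the straight isosceles family forced by the reflection symmetry exchanging bodies $1$ and $2$, in the spirit of Lemma~\ref{lem:A}.

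Next I would extract the frequencies. Embedding the planar configuration in $\R^2\oplus\R^2$ with each inertia axis in its own factor (the preliminary lemma), $\Omega^2$ acts as the scalar $-\omega_1^2$ on the first axis and $-\omega_2^2$ on the second, so equation \eqref{eqn:releq} splits into two independent scalar balance conditions, one per axis. Evaluating the first-axis equation at body $1$ and the second-axis equation at body $3$ gives
\[
\omega_1^2=\frac{m(2+\mu\rho^3)}{\rho^3 s^3},\qquad \omega_2^2=\frac{m(\mu+2)}{s^3}.
\]
Then $\mu_i=\Theta_i\omega_i$ yields $\mu_1=\tfrac12 m^{3/2}\sqrt{\rho s(2+\mu\rho^3)}$ and $\mu_2=\tfrac12 m^{3/2}\mu(4-\rho^2)\sqrt{s}/\sqrt{\mu+2}$, and forming $\mu_2/\mu_1$ collapses exactly to $\chi(\rho)$. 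Dividing $k=\mu_1\mu_2/(\mu_1+\mu_2)^2$ through by $\mu_1^2$ gives $k=\chi/(1+\chi)^2=1/(2+\chi+\chi^{-1})$, the stated $k(\rho)$.

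For the energy I would invoke the virial identity for relative equilibria: contracting the relative-equilibrium equation in the form $m_i\Omega^2\xi_i=-\nabla_{\xi_i}V$ with $\xi_i$ and summing over $i$ gives $\sum_i m_i\xi_i^t\Omega^2\xi_i=V$ by Euler's theorem for the degree $-1$ potential, while the left side equals $-(\omega_1^2\Theta_1+\omega_2^2\Theta_2)=-2T$; hence $2T=-V$ and $H=V/2$. Since $V=-m^2(1+2\mu\rho)/(\rho s)$ is read off directly from the three distances, $H=-m^2(1+2\mu\rho)/(2\rho s)$. Finally $h=H(\mu_1+\mu_2)^2=H\mu_1^2(1+\chi)^2$, and substituting $\mu_1^2=\tfrac14 m^3 s\,\rho(2+\mu\rho^3)$ cancels both $\rho$ and $s$, leaving $h(\rho)=-\tfrac18 m^5(1+2\rho\mu)(2+\rho^3\mu)(1+\chi)^2$.

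There is no deep obstacle here; the one point requiring genuine care is conceptual rather than computational—recognising that the planar triangle must be embedded in $\R^2\oplus\R^2$ with its two inertia axes placed in distinct planes, so that $\Omega^2$ becomes diagonal and \eqref{eqn:releq} decouples into two scalar equations. Once that decoupling is secured, everything else is bookkeeping, and the only real risk is algebraic: tracking the half-integer powers of $m$, $s$ and $\rho$ accurately so that the ratio defining $\chi$ and the cancellations producing the $m^5$ prefactor of $h$ emerge cleanly.
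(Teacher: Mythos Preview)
Your proposal is correct and follows essentially the same line as the paper's own proof: explicit placement of the isosceles triangle along its inertia axes, decoupling of \eqref{eqn:releq} into two scalar equations giving $\omega_1,\omega_2$, the identities $\mu_i=\Theta_i\omega_i$, the virial relation $2T+V=0$, and then the assembly of $h$ and $k$. Your axis labels are swapped relative to the paper's (your $\Theta_1,\omega_1$ are the paper's $\Theta_2,\omega_2$), but since $\chi$ enters $k$ symmetrically and $h$ depends only on $(1+\chi)^2\mu_1^2=(\mu_1+\mu_2)^2$, the formulas come out identical. The only genuine variation is that you certify the balanced condition via the vanishing determinant $B=0$ (two equal columns when $m_1=m_2$, $a=b$) rather than by checking all six scalar components of \eqref{eqn:releq}; this is a clean shortcut, though you should note that the two scalar equations you do evaluate then serve to \emph{determine} the frequencies rather than to verify balance, with the remaining components following by the reflection symmetry.
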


Before proving this result we describe the resulting energy-momentum diagrams for the 
two cases with mass ratio $\mu < 1$ and $\mu > 1$.

When the non-equal mass is smaller, see Fig.~\ref{fig:2equ12} for an example with $\mu = 1/2$, 
the  branch of the isosceles family  that has  minimal energy for fixed angular momentum $k$
extends until the tangency at $k =1/4$.
The loss of definiteness at the tangency with $k = 1/4$ is somewhat surprising since it occurs 
by driving an eigenvalue through infinity.

When the non-equal mass is bigger, see Fig.~\ref{fig:2equ2} for an example with  $\mu = 2$,
the situation is quite different. First of all in this case it is convenient to order the 
masses as $(\mu, 1, 1)$ and use the $c$-segment to obtain the smooth families
of balanced equilibria. Note that when this case is perturbed there is no 
continuity in the families, compare Fig.~\ref{fig:gen23}.
By contrast for $\mu < 1$ there is continuity in all three families.
For $\mu > 1$ the branch of the isosceles family that goes to $-\infty$  is not a global minimum.
For most values of $k$ the global minimum instead occurs on the asymmetric branch (blue).
This branch attaches in a tangency of its cusp to the isosceles branch very close to the top-tangency 
at $k=1/4$. Accordingly, there is a tiny branch of the isosceles branch which is 
in fact the absolute minimum. This occurs between 
the attachment point of the short family and the tangency at $k=1/4$.

For $\rho = 1$ the curve of isosceles triangles
in the $(h,k)$ diagram touches the endpoint of the Lagrange family at the 
point $(3 \mu(2 + \mu)/( 4 (1 + 2\mu)^2), -m^5( 1 + 2 \mu)^3/( 2 ( 2 + \mu)) )$, 
compare with Theorem 1.
This is a very special case of a general result by Chenciner about possible bifurcations \cite{Chenciner11}.

\begin{proof}
With the overall scaling factor $s$ let  $\xi_i \in \R^4$ be given by
\[
    \xi_1 = (0, -\gamma(\rho), 0, -1/2) \rho s, \quad
    \xi_2 = (0, -\gamma(\rho), 0, 1/2) \rho s, \quad
    \xi_3 = (0, 2\gamma(\rho)/\mu, 0, 0) \rho s,
\]
where 
\[
    \gamma(\rho) = \frac{  \sqrt{ 4 \rho^{-2} - 1} }{2 ( 1 + 2 \mu^{-1})} \,.
\]
This implies $d_{23} = d_{13} = s$ and $d_{12} = \rho s$ describing an isosceles triangle.
The non-zero eigenvalues of the diagonal tensor of inertia $S=\sum m_i \xi_i \xi_i^t$ 
are $\Theta_1 = m s^2 \mu ( 4 - \rho^2) / ( 2 ( 2 + \mu))$ and $\Theta_2 = \tfrac12 m s^2 \rho^2$
such that $4 \gamma^2 ( 2 + \mu) = \mu \Theta_1 / \Theta_2$.

The frequency matrix satisfies $\Omega^2 = \mathrm{diag}( -\omega_1^2 Id, -\omega_2^2 Id)$
with scalar frequencies
\[
\omega_1^2 = m ( \mu + 2) U'(s)/s, \quad
\omega_2^2 = m( \mu U'(s)/s + 2 U'(\rho s) / (\rho s)) 
\]
where $U(r) = -1/r$ is the two-particle potential.
With these definitions it is straightforward to check that indeed  equation \eqref{eqn:releq}
and its cyclic permutations are satisfied.
The  angular momentum tensor corresponding to these balanced configurations is block-diagonal and since it is 
anti-symmetric there are only two independent non-zero entries given by 
$L_{12} =  \Theta_1 \omega_1$ and $L_{34} =  \Theta_2 \omega_2$.
From these the invariants of $L$ can be computed, and the resulting scaled momentum is
\[
   k^{-1} = \frac{ ( L_{12} + L_{34})^2 } { L_{12} L_{34} }
      =  2  + \chi +  \chi^{-1}, \quad
      \chi = \frac{\omega_1}{\omega_2} \frac{\Theta_1}{\Theta_2} \,.
\]
It is easy to see that $d \chi / d\rho < 0$ and hence $\chi = 1$ implies $k = 1/4$ and that this is 
a non-degenerate maximum in $k$.
Finally the Hamiltonian is $H  = T + V$ where 
\[
   T = \frac12  ( \Theta_1 \omega_1^2  + \Theta_2 \omega_2^2), \quad
   V = - m^2 \frac{ 1 + 2 \rho \mu}{\rho s}  \,.
\]
The identity $2 T + V = 0$ holds at these balanced configurations. 
Thus we find
\[
   h = -\frac18 m^5 (1 + 2 \rho \mu)( 2 + \rho^3 \mu) (1 + \chi)^2  
\]
and this proves the result.
\end{proof}

\begin{figure} 
\includegraphics[height=6cm]{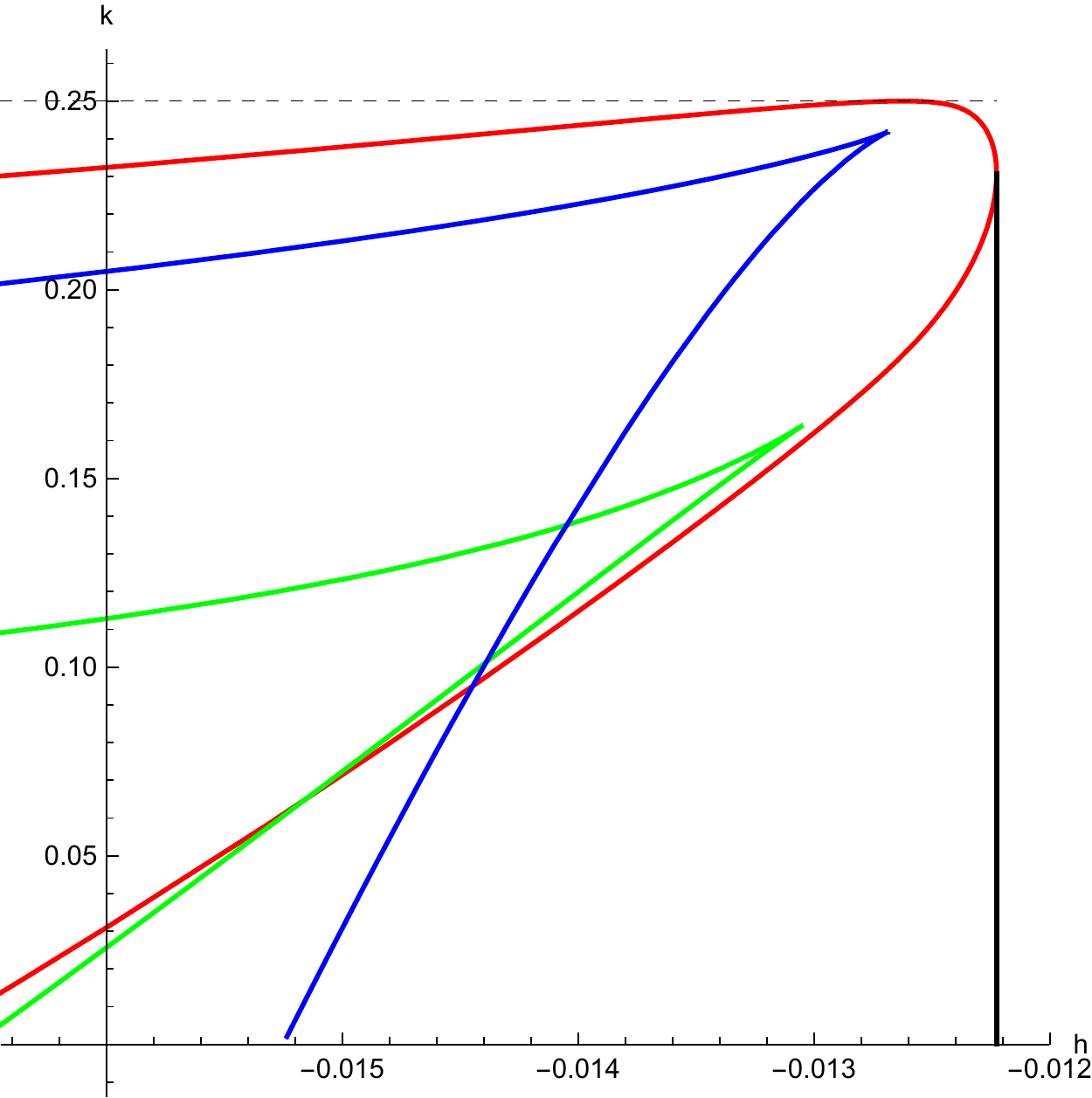}
\includegraphics[height=6cm]{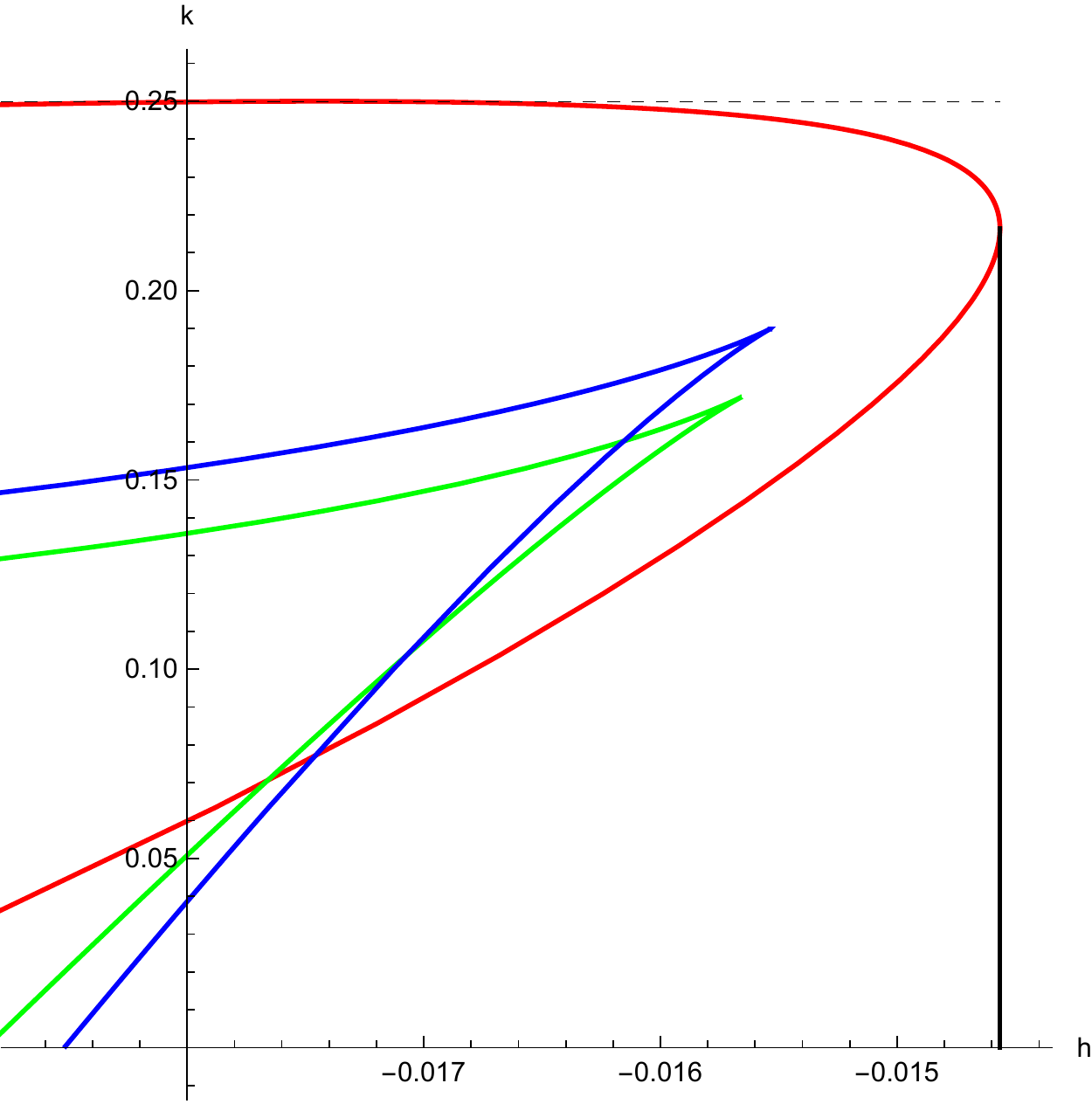}
\caption{Three distinct masses, somewhat close to the two isosceles cases.
Left: masses $(12, 5, 4)/21$, Right: masses $(6,5,2)/13$. 
The left figure illustrates that there is no continuity in the balanced families 
when perturbing from the case with two equal masses and the third mass larger 
than the equal ones, compare Fig.~\ref{fig:2equ2}.
} \label{fig:gen23}
\end{figure}

\section{Acknowledgement}

Extensive discussions with Rick Moeckel in 2015 at the Observatory in Paris that lead to the initial results of this paper 
are gratefully acknowledged. We also thank Alain Chenciner and James Montaldi for fruitful discussions at the same time. We would like to thank the anonymous referees for their comments which helped to improve the manuscript.

\appendix


\def\cprime{$'$}
\providecommand{\href}[2]{#2}
\providecommand{\arxiv}[1]{\href{http://arxiv.org/abs/#1}{arXiv:#1}}
\providecommand{\url}[1]{\texttt{#1}}
\providecommand{\urlprefix}{URL }

\end{document}